\newcommand{\ZZ}{\mathbb{Z}}
\newcommand{\CC}{\mathbb{C}}
\newcommand{\NN}{\mathbb{N}} 
\newcommand{\KK}{\mathbb{K}}
\newcommand{\PP}{\mathbb{P}}
\newcommand{\col}{\mathrm{col}}
\newcommand{\imag}{\operatorname{Im}}
\newcommand{\Aut}{\operatorname{Aut}}  
\newcommand{\End}{\operatorname{End}}
\newcommand{\id}{\operatorname{id}} 
\newcommand{\Sp}{\operatorname{span}}
\newcommand{\Hom}{\operatorname{Hom}}
\newcommand{\Gr}{\operatorname{Gr}}
\newcommand{\GL}{\operatorname{GL}} 
\newcommand{\rep}{\operatorname{rep}}
\newcommand{\Res}{\operatorname{Res}}
\newcommand{\AlgGr}{\operatorname{AlgGr}} 
\newcommand{\msa}{\mathfrak{msa}} 
\newcommand{\supp}{\operatorname{supp}}
\newcommand{\Iso}{\operatorname{Iso}} 
\newcommand{\Inn}{\operatorname{Inn}}
\newtheorem{theorem}{Theorem}[section]
\newtheorem{proposition}[theorem]{Proposition}
\newtheorem{lemma}[theorem]{Lemma}
\newtheorem{corollary}[theorem]{Corollary}
\newtheorem{definition}[theorem]{Definition}
\newtheorem{example}[theorem]{Example}
\newtheorem{problem}[theorem]{Problem}
\theoremstyle{remark}
\renewenvironment{proof}{{\noindent\bf Proof.}}{\hfill $\Box$\par\vskip3mm}
\begin{document}    

\title[Automorphism Groups Acting on Subalgebra Varieties]{Automorphism Groups of Finite-Dimensional Algebras Acting on Subalgebra Varieties} 
\author{Alex Sistko} 
\thanks{{2000 \textit{Mathematics Subject Classification}. Primary 16W20; Secondary 16S99, 16U60, 14L30, 14N99}\\
{\bf Keywords} finite-dimensional algebra, subalgebra, maximal subalgebra, automorphism group, subalgebra variety, quiver}
\address{Department of Mathematics, University of Iowa, 
Iowa City, IA 52242}
\email{alexander-sistko@uiowa.edu}

\maketitle{}
\date{}

\begin{abstract}

 Let $k$ be an algebraically-closed field, and $B$ a unital, associative $k$-algebra with $n := \dim_kB < \infty$. For each $1 \le m \le n$, the collection of all $m$-dimensional subalgebras of $B$ carries the structure of a projective variety, which we call $\AlgGr_m(B)$. The group $\Aut_k(B)$ of all $k$-algebra automorphisms of $B$ acts regularly on $\AlgGr_m(B)$. In this paper, we study the problem of explicitly describing $\AlgGr_m(B)$, and classifying its $\Aut_k(B)$-orbits. Inspired by recent results from \cite{IS}, we compute the homogeneous vanishing ideal of $\AlgGr_{n-1}(B)$ when $B$ is basic, and explictly describe its irreducible components. We show that in this case, $\AlgGr_{n-1}(B)$ is a finite union of $\Aut_k(B)$-orbits if $B$ is monomial or its Ext quiver is Schur, but construct a class of examples to show that these conditions are not necessary.
\end{abstract}  


\section{Introduction}\label{s.intro} 

Within contemporary algebra, finite-dimensional associative algebras undoubtedly play a critical role. For instance, their (finite-dimensional) representation theory can be reasonably said to unite the (finite-dimensional) representation theories of groups, quivers, and other categories of continuing interest. But they can also be studied directly, ex. as a subcategory of the category of associative rings, or algebras over a fixed base field. We might call this direct approach ``ring-theoretic,'' although this title perhaps obscures the difficulty and importance of the problems that arise, since noncommutative ring theorists do not seem particularly interested in finite-dimensional objects. For instance, this approach is necessary to understand automorphism groups of finite-dimensional algebras, which can be related to important symmetries of their module categories \cite{Bass}, \cite{Fr}. It is also necessary for the study of varieties of algebras of a fixed dimension, or for related classification problems \cite{deGraaf}, \cite{Gab}.  

To remedy this hopelessness, we can take a relative point of view. Instead of trying to classify $n$-dimensional algebras over a field $k$, we can start with such an algebra $B$, and show how conditions on $B$ influence its quotient algebras and subalgebras. Of the two, quotients appear to be better-behaved than subalgebras. Nevertheless, subalgebras of finite-dimensional algebras still remain important to contemporary mathematics: consider the theory of split and separable extensions \cite{Raf}, or the work of Motzkin and Taussky \cite{Motz1} \cite{Motz2} on commutative subalgebras of matrix rings. In particular, maximal subalgebras of finite-dimensional algebras, possibly with respect to a certain property (ex. commutativity), have been a subject of interest for decades. Maximal commutative subalgebras of $M_n(k)$ have been studied extensively, for instance in \cite{Sch}, \cite{Jac}, \cite{Mir}, \cite{Motz1}, \cite{Motz2}, \cite{Ger2}, \cite{Laff}. Maximal subalgebras of central simple algebras were classified by Racine in \cite{Rac0}, \cite{Rac}. Independently, Agore \cite{Ag} used a geometric argument of Gerstenhaber \cite{Ger} to bound the maximal dimension of a $\CC$-subalgebra in $M_n(\CC)$ by $n^2-n+1$. More recently, the author and Iovanov \cite{IS} found general structure theorems for maximal subalgebras of finite-dimensional algebras, provided explicit classifications for semisimple algebras and basic algebras, and connected their theory to split and separable extensions of algebras.

This paper was inspired by a question raised during the writing of \cite{IS}: if $B$ is a finite-dimensional algebra, and $A, A' \subset B$ are two isomorphic subalgebras, under what conditions do we know that $A' = \psi (A)$ for some $k$-algebra automorphism $\psi \in \Aut_k(B)$? More generally, can we classify $\Aut_k(B)$-orbits of subalgebras of $B$, and relate them to isoclasses of subalgebras? To study this question, it seems natural to try and exploit the structure of $\Aut_k(B)$, which is a linear algebraic group. For each $m \le \dim_kB$, we define a (projective) $k$-variety $\AlgGr_m(B)$ which parametrizes $m$-dimensional subalgebras, and on which $\Aut_k(B)$ acts regularly. We then seek to understand geometric properties of $\AlgGr_m(B)$, and properties of the $\Aut_k(B)$-action upon it. 

The idea of studying subalgebras through the variety $\AlgGr_m(B)$ was partially inspired from recent work in representation theory. Taking $B = kQ/I$ for some quiver $Q$ and admissible ideal $I$, representation theorists have long studied affine and projective varieties parametrizing $B$-modules of a fixed dimension vector $\underline{d} = (d_1,\ldots , d_n)$. For exposition, see \cite{Bong}, \cite{Geiss}, \cite{HZ}. The affine theory appears to extend at least as far back as Gabriel's theorem on representation-finite hereditary algebras \cite{HZ}, while the projective theory was developed by Bongartz and Huisgen-Zimmermann in \cite{BHZ1}, \cite{BHZ2}. A common unifying principle is the construction of moduli spaces in the spirit of King \cite{King}. On the algebra side, Gabriel \cite{Gab} constructs an affine variety $\operatorname{Alg}(n)$ which parametrizes $n$-dimensional algebras over a fixed field $k$. In a broad sense, one might hope that $\AlgGr_m(B)$ provides a projective context to study questions related to Gabriel's $\operatorname{Alg}(n)$, in analogy to the work described above. However, $\AlgGr_m(B)$ is in some sense fundamentally different, with its focus on subalgebras of a fixed algebra $B$, rather than all algebras of a fixed dimension. But again, this relative perspective has an analogy in geometric representation theory, where others have used projective varieties to study submodules of a fixed dimension vector. Indeed, if $M$ is a $\underline{d}$-dimensional $kQ$-module and $\underline{e} = (e_1,\ldots , e_n)$ is a dimension vector satisfying $e_i \le d_i$ for all $1\le i \le n$, the Quiver Grassmannian $\Gr_{\underline{e}}^Q(M)$ of all $\underline{e}$-dimensional $kQ$-submodules of $M$ has provoked recent interest. These varieties first appeared in the work of Schofield \cite{Scho} and Crawley-Boevey \cite{CB}. Subsequent work can be seen from Reineke \cite{Rein}, Caldero-Reineke \cite{CR}, and Cerulli Irelli-Feigin-Reineke \cite{CIFR1}, \cite{CIFR2}, \cite{CIFR3}. 

The paper is organized as follows. Section \ref{s.bg} supplies the main definitions used in this paper, and briefly reviews relevant results from \cite{IS}. In section \ref{s.sav} we introduce the variety $\AlgGr_m(B)$ of $m$-dimensional subalgebras of $B$, and discusses its basic properties. Section \ref{s.msa} contains the main results of this paper. In it, we give an explicit realization for $\AlgGr_{\dim_kB-1}(B)$ where $B$ is a basic, including a description of its irreducible components. More specifically, we prove the following: 

\begin{theorem} 
Let $B = kQ/I$, for some quiver $Q$ and admissible ideal $I$. Let $\dim_kB = n$. Then $\AlgGr_{n-1}(B)$ only depends on the Ext-quiver of $B$. Furthermore, there exists a closed embedding $\AlgGr_{n-1}(B) \hookrightarrow \mathbb{P}^{N}$, for some natural number $N$, under which the irreducible components of $\AlgGr_{n-1}(B)$ become linear subspaces. The homogeneous vanishing ideal of $\AlgGr_{n-1}(B)$ with respect to this embedding is explicitly computed, and its irreducible components are explicitly identified. 
\end{theorem}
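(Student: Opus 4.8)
The plan is to reduce the study of codimension-one subalgebras of $B = kQ/I$ to a linear-algebraic problem governed entirely by the arrows of the Ext-quiver, and then package the resulting combinatorics into an explicit system of homogeneous equations. First I would recall the structure theorem for maximal subalgebras of basic algebras from \cite{IS}: every maximal subalgebra $A \subset B$ has codimension one (since $B/\rad B$ is a product of copies of $k$), and such $A$ is obtained by modifying $B$ at a single ``place'' — either by deleting an arrow (a ``separable'' or ``split-type'' maximal subalgebra) or by a ``gluing'' of two vertices joined by the relevant arrow configuration. Concretely, fix a basis of $B$ adapted to the path filtration; a codimension-one subspace $A$ containing $1_B$ and the square of the radical is determined by a single linear functional, and the condition that $A$ be a subalgebra translates into a small set of bilinear constraints on that functional. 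The key point is that all of this data only sees $\rad B / \rad^2 B$ together with the multiplication $B/\rad^2 B$ — equivalently, the arrows of the Ext-quiver and how they compose modulo longer paths — so $\AlgGr_{n-1}(B)$ is unchanged if we replace $B$ by any other algebra with the same Ext-quiver; this proves the first assertion.

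Next I would make the embedding explicit. Since any $(n-1)$-dimensional subalgebra $A$ necessarily contains the codimension-$\delta$ subspace $W := k\cdot 1_B + \rad^2 B + (\text{paths of length} \ge 2)$-part (more precisely, the intersection of all maximal subalgebras, which by the \cite{IS} classification is cut out inside $\rad B$ by the span of the arrows), the subalgebra $A$ is recorded by the line it determines in the quotient vector space $V := \rad B / (W \cap \rad B)$, whose dimension $\delta + 1$ equals the number of arrows of $Q$ plus one, or a comparably small number. Thus $A \mapsto [A/W] \in \PP(V) =: \PP^N$ is the desired map. I would then check it is a closed embedding by exhibiting the inverse on its image as a regular map (reconstruct $A$ as the preimage of the hyperplane), and identify the image: a point $[\varphi] \in \PP^N$ lies in $\AlgGr_{n-1}(B)$ iff the corresponding hyperplane $A_\varphi$ is closed under multiplication, which — because multiplication of two radical elements lands in $\rad^2 B \subset A_\varphi$ automatically, and multiplication by $1_B$ is trivial — reduces to the single family of bilinear conditions coming from an element of $B/\rad B$ times an arrow. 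Working these out gives a finite list of homogeneous quadratic (and, where a vertex acts as an idempotent, linear) equations in the coordinates $\varphi_a$ indexed by arrows $a$; I would write the homogeneous vanishing ideal as the ideal generated by these. Finally, the irreducible components: the equations decouple along connected ``blocks'' of arrows — arrows sharing a source or target vertex in a way forced by the structure theorem — and on each block the variety is either a single point or a linear subspace (a coordinate subspace, or the projectivization of a kernel of the relevant multiplication map), so the irreducible components are precisely these linear strata, indexed by the combinatorial types of codimension-one modifications (deletions vs.\ gluings) catalogued in \cite{IS}.

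The main obstacle, I expect, is the bookkeeping in the gluing case: when two vertices $e_i, e_j$ of the Ext-quiver are connected by arrows in both directions, or when several arrows are incident to a common vertex, the subalgebra condition mixes several coordinates $\varphi_a$ nonlinearly, and one must verify that after projectivizing, the solution set is still a union of \emph{linear} subspaces rather than a genuinely quadric hypersurface. I would handle this by arguing that on each block the multiplication map $\rad B/\rad^2B \to \End(\text{relevant summand})$ is linear in $\varphi$, so the condition ``$A_\varphi$ is multiplicatively closed'' is equivalent to $\varphi$ lying in the kernel of a fixed linear map twisted by the idempotent decomposition — making linearity manifest — and then matching this description against the explicit list of maximal subalgebras from \cite{IS} to confirm that every component is realized and no spurious ones appear. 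A secondary subtlety is ensuring the count $N$ and the equations are stated in a way that is manifestly independent of the choice of presentation $kQ/I$, which follows once everything is phrased in terms of $\rad B/\rad^2 B$ and the induced multiplication on $B/\rad^2 B$.
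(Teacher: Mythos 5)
Your overall strategy matches the paper's: reduce to $B/J(B)^2 \cong T_2Q$ using the fact that every maximal subalgebra of a basic algebra has codimension one and contains $J(B)^2$ (so only the Ext-quiver matters), embed by recording the hyperplane $A/k\cdot 1$, write down multiplicative-closure conditions, and read off components from the classification in \cite{IS}. However, there are two genuine gaps. First, your ambient space is wrong: you propose $V = \rad B/(W\cap\rad B)$, essentially the span of the arrows, but the separable-type maximal subalgebras $A(u+v)$ contain \emph{all} of $\rad B$ and differ from one another only in their intersection with the span of the vertices. Under your map they would all collapse to a single point, so the map is not injective. The paper embeds $\msa(Q)$ into $\Gr_{|Q_0|+|Q_1|-2}(V)$ with $V = \Sp_k(Q_0\setminus\{v_0\}\cup Q_1)$, i.e.\ the vertex coordinates (minus one, absorbed by $1_B$) must be retained, giving $N = |Q_0|+|Q_1|-2$.

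Second, the computation of the homogeneous vanishing ideal --- the real content of the theorem --- is not carried out, and your guess about its shape is incorrect. You predict ``bilinear'' quadratic constraints plus some linear ones, but the paper's ideal $X = X_0 + X_{1/2} + X_1$ requires \emph{cubic} generators among the vertex variables, namely $v_iv_jv_k$ and $v_i^2v_j - (-1)^{j-i-1}v_iv_j^2$; these are exactly what cut out the separable-type locus (the functional may be supported on at most two vertices, with a forced sign relation coming from the minor coordinates), and no linear generators appear. Moreover, showing that these generators give the full homogeneous vanishing ideal (not merely the right zero set) is the hard direction: the paper does this by evaluating an arbitrary homogeneous $f$ on explicit frames for the families $A(v_j+v_k)$ and $A(\lambda)$ and peeling off coefficients, an argument your proposal does not engage with. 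Finally, the linearity of the irreducible components is not established by your ``kernel of a linear map'' heuristic; the paper verifies it by exhibiting each candidate component as the vanishing set of a prime $P \supset X$ with $k[\msa(Q)]/P$ a polynomial ring. Your instinct that a quadric might appear in the ``gluing'' case is a reasonable worry, but resolving it requires the explicit frame computations, not just the block decoupling you describe.
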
 

\noindent See theorem \ref{t.msa(Q)} for a description of the vanishing ideal and corollary \ref{c.irred} for the description of the irreducible components. As a consequence, we have the following:  

\begin{corollary}
If $B$ is basic, then $\AlgGr_{n-1}(B)$ is irreducible if and only if its Ext-quiver is either an $m$-loop quiver, and $m$-Kronecker quiver, or two isolated vertices.
\end{corollary}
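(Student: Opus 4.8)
The plan is to deduce the corollary directly from the preceding theorem, which reduces $\AlgGr_{n-1}(B)$ to an invariant of the Ext-quiver $Q$ alone, together with the explicit description of the irreducible components (referenced as corollary \ref{c.irred}). Since the irreducible components become \emph{linear} subspaces of $\mathbb{P}^N$ under the given embedding, $\AlgGr_{n-1}(B)$ is irreducible precisely when there is exactly one such linear component. So the first step is to recall, from the combinatorial description of the components, exactly which data of $Q$ indexes them — presumably something like pairs of arrows, arrows together with vertices, or connected sub-configurations of $Q$ — and then to characterize combinatorially when that indexing set has size one.

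**Carrying it out.**

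First I would write down, for a general finite quiver $Q$, the number of irreducible components of $\AlgGr_{n-1}(B)$ in terms of $Q$; by corollary \ref{c.irred} this is a purely combinatorial count (number of arrows, of vertices, of parallel classes, etc.). Next I would split into cases according to whether $Q$ has any arrows. If $Q$ has no arrows, then $B$ is semisimple basic, i.e. a product of copies of $k$, and one checks that $\AlgGr_{n-1}(B)$ is irreducible exactly when $n \le 2$, i.e. $Q$ has at most two vertices; two isolated vertices is the only nontrivial such case (one vertex gives $B = k$, $n=1$, and $\AlgGr_0(B)$ is a point or empty, which I would address as a degenerate case or exclude as in the statement). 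If $Q$ has at least one arrow, then since $I$ is admissible $Q$ is connected on its arrow-support, and I would show the component count equals one exactly when all arrows of $Q$ are ``parallel'' in the appropriate sense: either all are loops at a single vertex (the $m$-loop quiver) or all go between the same ordered pair of distinct vertices (the $m$-Kronecker quiver). The key computation is that any arrow not fitting this pattern — e.g. two arrows forming a path of length two, or two arrows at disjoint vertices, or an arrow plus an extra isolated vertex — produces a second independent component, because each ``local configuration'' of $Q$ contributes its own linear piece to the decomposition.

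**Finishing and the main obstacle.**

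Finally I would assemble the three surviving cases ($m$-loop quiver, $m$-Kronecker quiver, two isolated vertices) and verify the converse: in each of these three cases the combinatorial index set from corollary \ref{c.irred} is a singleton, hence $\AlgGr_{n-1}(B)$ is a single linear subspace of $\mathbb{P}^N$, which is irreducible. The main obstacle I anticipate is purely bookkeeping: correctly translating the (possibly multi-indexed) description of the irreducible components from corollary \ref{c.irred} into a clean statement ``there is exactly one component iff $Q$ is one of these three quivers,'' and making sure the degenerate small cases ($n=1$, or $Q$ a single vertex, or $Q$ a single loop) are handled consistently with the conventions fixed earlier for $\AlgGr_m(B)$ when $m$ is small. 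No hard geometry should be needed beyond ``a finite union of linear subspaces is irreducible iff there is only one of them,'' together with the fact — already available from the theorem — that these subspaces are genuinely distinct whenever the index set has more than one element.
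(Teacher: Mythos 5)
Your proposal is correct and follows essentially the same route as the paper: the paper likewise verifies irreducibility directly for the three listed quivers (Example \ref{e.irred}) and then uses the explicit component description of Corollary \ref{c.irred} to show every other quiver yields more than one component (the paper phrases the connected case as ``$X_1\neq 0$, so the coordinate ring is not a domain,'' but this is the same counting in disguise). One small caution: admissibility of $I$ does \emph{not} force $Q$ to be connected on its arrow-support, but this slip is harmless since your component count already handles disconnected configurations.
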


 \noindent See definition \ref{d.irredQ} for all relevant terminology. 

 Finally, section \ref{s.iso} discusses the action of $\Aut_k(B)$ on $\AlgGr_{n-1}(B)$ in the basic case. In particular, we study conditions under which $\AlgGr_{n-1}(B)$ is a finite union of $\Aut_k(B)$-orbits. Using results from \cite{IS}, we prove the following theorem: 
 
 \begin{theorem} 
 Let $B$, $Q$, and $n$ be as before. Let $H_B$ be the group of all $B$-automorphisms fixing the vertices of $Q$. Then $\AlgGr_{n-1}(B)$ is a finite union of $\Aut_k(B)$-orbits if and only if for every pair $(u,v) \in Q_0\times Q_0$ with $uJ(B)/J(B)^2v \neq \{ 0\}$, $\mathbb{P}((uJ(B)/J(B)^2v)^*)$ is a finite union of $H_B$-orbits.
 \end{theorem}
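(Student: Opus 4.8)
The plan is to reduce the problem, via the explicit description of $\AlgGr_{n-1}(B)$ in Theorem~\ref{t.msa(Q)}, to a statement about the unipotent and ``reductive'' parts of $\Aut_k(B)$, and then to read off the equivalence from elementary orbit combinatorics. First I would pass to $\bar B:=B/J(B)^2$: by Theorem~\ref{t.msa(Q)} every codimension-one subalgebra of $B$ contains $J(B)^2$, so $A\mapsto A/J(B)^2$ identifies $\AlgGr_{n-1}(B)$ with $\AlgGr_{\bar n-1}(\bar B)$ ($\bar n=\dim_k\bar B$), and $\Aut_k(B)$ acts on it through its image $\hat G$ in $\Aut_k(\bar B)$, an image containing the group $\overline{\Inn}$ of inner automorphisms of $\bar B$ by $1+J(\bar B)$ and the image $\overline{H_B}$ of $H_B$. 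Writing $J:=J(\bar B)$ (so $J^2=0$) and $uJv:=e_uJe_v\cong uJ(B)/J(B)^2v$, I would record from Theorem~\ref{t.msa(Q)} and Corollary~\ref{c.irred} the stratification
\[
\AlgGr_{\bar n-1}(\bar B)=\Big(\bigsqcup_{(u,v):\,uJv\neq 0}C_{(u,v)}\Big)\ \sqcup\ \Big(\bigsqcup_{\{i,j\},\,i\neq j}\{A_{\{i,j\}}\}\Big),
\]
where $A_{\{i,j\}}=k(e_i+e_j)\oplus\bigoplus_{l\neq i,j}ke_l\oplus J$ and $C_{(u,v)}$ is the locus of subalgebras $A$ whose radical $A\cap J$ has codimension one in $J$, ``located in the $(u,v)$-block'': $A\supseteq e_iJe_j$ for $(i,j)\neq(u,v)$ while $A\cap uJv$ is a hyperplane of $uJv$. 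The assignment $\rho_{(u,v)}\colon A\mapsto[A\cap uJv]\in\mathbb{P}((uJv)^*)$ is then a surjective morphism, and the $\overline{C_{(u,v)}}$ together with the isolated $A_{\{i,j\}}$ are exactly the irreducible components.

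Next I would invoke the structure of $\Aut_k(B)$ for basic $B$ (see \cite{IS}): $\Aut_k(B)=\Inn_J\cdot H_B\cdot\Pi$ with $\Inn_J=\{\iota_u:u\in 1+J(B)\}$ connected, normal and unipotent and $\Pi$ finite inducing quiver automorphisms; hence $N:=\Inn_J\cdot H_B$ has finite index, and $\AlgGr_{n-1}(B)$ is a finite union of $\Aut_k(B)$-orbits if and only if it is a finite union of $N$-orbits. In $\bar B$ the group $\bar N:=\overline{\Inn}\cdot\overline{H_B}$ stabilizes each $C_{(u,v)}$ and fixes each $A_{\{i,j\}}$ (these subgroups preserve the vertex-block decomposition), while $\Pi$ merely permutes the $C_{(u,v)}$'s and the $A_{\{i,j\}}$'s. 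I would then establish three facts: (i) $\overline{\Inn}$ is pointwise trivial on $J$, since $\iota_{1+j}(x)=x+[j,x]+\cdots$ and $[j,x]\in J^2=0$ for $x\in J$; in particular $\overline{\Inn}$ is trivial on each $uJv$; (ii) $\rho_{(u,v)}$ is $\overline{H_B}$-equivariant and, by (i), $\overline{\Inn}$-invariant, so $\bar N$ acts on $\mathbb{P}((uJv)^*)$ through $\bar N/\overline{\Inn}$, a quotient of $\overline{H_B}$, whence this action has precisely the $H_B$-orbits; (iii) $\overline{\Inn}$ acts transitively on each fibre $\rho_{(u,v)}^{-1}([W])$. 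Fact (iii) is the crux: an $A$ in that fibre has $A\cap J=W$ fixed and surjects onto $\bar B/J$, so by Wedderburn--Malcev $A=S\oplus W$ with $S$ spanned by a complete set of primitive orthogonal idempotents of $\bar B$ lifting the $e_i$; any two such sets are conjugate by some $u\in 1+J$, and $\iota_u$ fixes $W$ pointwise by (i), so $\iota_u$ carries one such $A$ onto any other.

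Finally, (i)--(iii) give a bijection between $\bar N$-orbits on $C_{(u,v)}$ and $H_B$-orbits on $\mathbb{P}((uJ(B)/J(B)^2v)^*)$, and both implications of the theorem follow. If every $\mathbb{P}((uJ(B)/J(B)^2v)^*)$ is a finite union of $H_B$-orbits, then each $C_{(u,v)}$ is a finite union of $\bar N$-orbits, and since $\AlgGr_{n-1}(B)$ is the disjoint union of the finitely many $C_{(u,v)}$'s with the finitely many $\bar N$-fixed points $A_{\{i,j\}}$, it is a finite union of $\bar N$-orbits, hence of $\Aut_k(B)$-orbits. Conversely, a finite $\Aut_k(B)$-orbit decomposition of $\AlgGr_{n-1}(B)$ is a finite $\bar N$-orbit decomposition ($\bar N$ has finite index in the image of $\Aut_k(B)$), so each $\bar N$-stable $C_{(u,v)}$ has finitely many $\bar N$-orbits; pushing forward along the surjective equivariant $\rho_{(u,v)}$ shows $\mathbb{P}((uJ(B)/J(B)^2v)^*)$ has finitely many $(\bar N/\overline{\Inn})$-orbits, i.e.\ $H_B$-orbits. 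The main obstacle is concentrated in the previous paragraph: isolating the stratification and the morphisms $\rho_{(u,v)}$ from Theorem~\ref{t.msa(Q)}, and proving transitivity of inner automorphisms on the fibres (an application of the conjugacy of Wedderburn complements); everything else is bookkeeping with a finite-index subgroup and an equivariant surjection. The degenerate cases ($|Q_0|=1$, no arrows, or $u=v$ with loops, where a fibre of $\rho_{(u,v)}$ may be a single point) only simplify the argument.
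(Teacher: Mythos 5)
Your proposal is correct and follows essentially the same route as the paper: both reduce to the split-type locus via the classification of maximal subalgebras up to $\Inn^*(B)$-conjugacy (your transitivity-on-fibres step is exactly the Wedderburn--Malcev/Theorem~\ref{t.basic} input), identify orbits there with $H_B$-orbits of hyperplanes in $ukQ_1v$, and handle the discrepancy between $\Aut_k(B)$ and $\Inn^*(B)H_B$ by the finite-index argument. Your packaging via the equivariant surjection $\rho_{(u,v)}$ is a mild reorganization of the paper's matching of Jacobson radicals, not a different argument.
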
 
 
\noindent See Theorem \ref{t.orb} for details. In the above, $(uJ(B)/J(B)^2v)^*$ denotes the vector space dual. As a consequence, we show that $\AlgGr_{n-1}(B)$ is a finite union of $\Aut_k(B)$-orbits in the case that either $Q$ is Schur, or $B$ is monomial. We end this section by constructing a class of $B$ satisfying this finite orbit property, but for which $B$ is not generally monomial and its Ext-quiver is not generally Schur.


\section{Background}\label{s.bg}

Unless otherwise stated, $k$ will denote an algebraically-closed field. All algebras are unital, associative, finite-dimensional $k$-algebras. Our terminology on bound quiver algebras essentially comes from \cite{ASS}. Let $Q$ be a finite quiver with vertex set $Q_0$, arrow set $Q_1$, and source (resp. target) function $s$ (resp. $t$) $: Q_1\rightarrow Q_0$. Let $kQ$ denote the path algebra of $Q$, and let $J(Q)$ denote the two-sided ideal in $kQ$ generated by $Q_1$. By a slight abuse of notation, for any $u, v \in Q_0$ we let $uQ_1v$ denote the set of arrows in $Q$ with source $u$ and target $v$, and we let $ukQ_1v$ denote their $k$-span inside $kQ$. Define $d(u,v) = \dim_kukQ_1v$. Note that if $uQ_1v = \emptyset$, then $ukQ_1v = \{ 0 \}$ and $\GL(ukQ_1v)$ is the trivial group. Similar to \cite{GS2}, we define $V^2(Q) = \{ (u,v) \in Q_0\times Q_0\mid uQ_1v \neq \emptyset\}$. The \emph{no-double edge graph} of a quiver $Q$ is the (undirected) graph with vertex set $Q_0$, and an edge between $u$ and $v$ if and only if $uQ_1v \cup vQ_1u \neq \emptyset$.  

A \emph{basic algebra} is an algebra of the form $B = kQ/I$, where $I$ is an \emph{admissible ideal} of $kQ$, i.e. an ideal satisfying $J(Q)^2 \supset I \supset J(Q)^{\ell}$ for some $\ell \geq 2$. Note that $B = kQ_0 \oplus J(B) = kQ_0 \oplus J(Q)/I$, and that $kQ_0 \cong B/J(B) \cong k^{|Q_0|}$. In fact, the Wedderburn-Malcev theorem tells us that this decomposition is unique in the following sense: for all subalgebras $B_0 \subset B$ isomorphic to $kQ_0$, there is an $x \in J(B)$ such that $(1+x)B_0(1+x)^{-1} = kQ_0$ (see, for instance, \cite{Pi}).

 For $n \geq 2$, we define $T_nQ := kQ/J(Q)^n$, the \emph{$n^{th}$ truncated quiver algebra} associated to $Q$. From here on out, we put total orderings on $Q_0$ and $Q_1$: write $Q_0 = \{ v_0 < \ldots < v_{n_0}\}$ for some $n_0 \in \ZZ_{\geq 0}$ and $Q_1 = \{ \alpha_1<\ldots < \alpha_{n_1} \}$ for some $n_1 \in \NN$. Note that for the sake of stating the results with a minimum of caveats, the author assumes $Q_1 \neq \emptyset$ for our generic quiver $Q$; the theorems are still valid, albeit trivial, for the $Q_1 = \emptyset$ case. We also assume that the total ordering on $Q_1$ satisfies the following condition: if $uQ_1v \neq \emptyset$, then the arrows in $uQ_1v$ form an interval within the total ordering of $Q_1$ (i.e. there exist indices $1 \le i_{uv}  \le n_1$ and $n_{uv} \geq 0$ such that $ukQ_1v = \{ \alpha_{i_{uv}}, \alpha_{i_{uv}+1},\ldots , \alpha_{i_{uv}+n_{uv}} \}$). 

We let $\Aut_k(B)$ denote the group of all $k$-algebra automorphisms of $B$. It is a Zariski-closed subgroup of $\GL(B)$, and hence a linear affine algebraic group. Our notation for subgroups of $\Aut_k(B)$ is borrowed from the notation in \cite{Poll}, \cite{GS1}, \cite{GS2}. Given two subalgebras $A$ and $A'$ of $B$ and a subgroup $G \le \Aut_k(B)$, we say they are \emph{$G$-conjugate} in $B$ if there exists a $\phi \in G$ such that $\phi (A) = A'$. If $G = \Aut_k(B)$, we just say that they are conjugate. For a unit $u \in B^{\times}$, we let $\iota_u$ denote the corresponding \emph{inner automorphism}, i.e. the map $\iota_u(x) = uxu^{-1}$ for all $x \in B$. We let $\Inn (B)$ denote the group of all inner automorphisms, and $\Inn^*(B) = \{ \iota_{1+x} \mid x \in J(B)  \}$ denote the group of \emph{unipotent inner automorphisms}. For a quiver $Q$, we define $S_Q$ to be the permutations $\sigma$ of $Q_0$ satisfying $|\sigma (u)Q_1\sigma (v)| = |uQ_1v|$ for all $u,v \in Q_0$. This is a quotient of $\Aut(Q)$, and with our ordering of $Q_1$, it acts as a subgroup of $\Aut_k(T_nQ)$ by defining it on arrows as follows: for $\sigma \in S_Q$, and $(u,v) \in V^2(Q)$ with $uQ_1v = \{ \alpha_i,\ldots \alpha_{i+d} \}$ and $\sigma (u) Q_1\sigma (v) = \{ \alpha_j , \ldots , \alpha_{j+d}\}$, $\sigma (\alpha_{i+\ell}) = \alpha_{j+\ell}$.    

Most of the constructions outlined in the next section work for arbitrary $B$, but basic algebras will be the primary focus of this paper. There is a simple reason for this: subalgebras of basic algebras are particularly well-behaved.  For instance, maximal subalgebras of basic algebras can be classified explicitly up to $\Inn^*(B)$-conjugation. Indeed, theorem 4.1 of \cite{IS} shows the following: 

\begin{theorem}\label{t.basic}
Let $B = kQ/I$ be a basic algebra over an algebraically-closed field $k$. Let $A \subset B$ be a maximal subalgebra. Consider the following two classes of maximal subalgebras of $B$: 
\item For a two-element subset $\{ u,v \} \subset Q_0$, we define  
\begin{equation*}
A(u+v) := k(u+v)\oplus \left( \bigoplus_{w \in Q_0\setminus\{ u,v\}}{kw} \right) \oplus J(B). 
\end{equation*}
\item For an element $(u,v) \in V^2(Q)$ and a codimension-$1$ subspace $U \le ukQ_1v$, we define 
\begin{equation*} 
A(u,v,U) := kQ_0 \oplus U \oplus \left( \bigoplus_{(w,y) \in Q_0^2\setminus\{(u,v)\}}{wkQ_1y} \right) \oplus J(B)^2. 
\end{equation*}  

\noindent Then there exists a unipotent inner automorphism $\iota_{1+x} \in \Inn^*(B)$ such that either $\iota_{1+x}(A) = A(u+v)$ or $\iota_{1+x}(A) = A(u,v,U)$, for some appropriate choice of $u$, $v$, and possibly $U$.
\end{theorem}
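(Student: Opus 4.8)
The plan is to split on how $A$ meets $J := J(B)$, pushing the real content into two places: the classification of unital subalgebras of $B/J \cong k^{|Q_0|}$, and a Nakayama-type argument forcing $J^2 \subseteq A$. Recall $B = kQ_0 \oplus J$ with $J$ nilpotent; I fix the $kQ_0$-bimodule decomposition $J = S \oplus J^2$, where $S := \bigoplus_{(u,v)\in V^2(Q)} ukQ_1v$ is the arrow span, and I note the routine fact that $A(u+v)$ and $A(u,v,U)$ have codimension $1$ in $B$, hence are maximal.

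Given a maximal subalgebra $A$, the subalgebra $A + J$ contains $A$, so either $J\subseteq A$ or $A + J = B$. If $J\subseteq A$, then $A/J$ is a maximal subalgebra of $k^{|Q_0|}$; identifying $k^{|Q_0|}$ with functions on $Q_0$, every unital subalgebra is the span of a set of orthogonal idempotents, hence the algebra of functions constant on the blocks of a partition of $Q_0$, and it is a maximal proper subalgebra precisely when that partition has a unique non-singleton block, of size $2$. So $A/J$ corresponds to a two-element set $\{u,v\}\subseteq Q_0$, and taking preimages in $B$ yields $A = k(u+v)\oplus\bigoplus_{w\ne u,v}kw\oplus J = A(u+v)$ (with $x=0$). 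Now assume $A + J = B$. Then $A\cap J$ is a nilpotent ideal of $A$ with $A/(A\cap J)\cong B/J$ semisimple, so $A\cap J = J(A)$; by the Wedderburn--Malcev theorem $A = A_0\oplus J(A)$ for a semisimple subalgebra $A_0\cong kQ_0$, and by the version of that theorem recalled in Section~\ref{s.bg} there is $x\in J$ with $\iota_{1+x}(A_0) = kQ_0$. Replacing $A$ by $\iota_{1+x}(A)$ --- again maximal, still satisfying $A + J = B$, and now containing $kQ_0$ --- we may assume $A = kQ_0\oplus(A\cap J)$ with $A\cap J$ a $kQ_0$-subbimodule of $J$.

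The crux is to show $J^2\subseteq A$. Since $A + J^2$ is a subalgebra containing the maximal subalgebra $A$, either $J^2\subseteq A$ or $A + J^2 = B$; suppose the latter. Comparing dimensions in $A + J = B$ and $A + J^2 = B$ gives $\dim(A\cap J) - \dim(A\cap J^2) = \dim(J/J^2)$, and since $(A\cap J)/(A\cap J^2)$ embeds into $J/J^2$, this forces $(A\cap J) + J^2 = J$. Writing $L := A\cap J$, which is closed under multiplication, $J = L + J^2$ gives $J^2 = (L+J^2)^2\subseteq L + J^3$ (using $L^2\subseteq L$), hence $J = L + J^3$, and inductively $J = L + J^m$ for all $m$; as $J$ is nilpotent, $J = L\subseteq A$, contradicting $A + J = B\ne A$. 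So $J^2\subseteq A$. Then $A\cap J$ is a proper $kQ_0$-subbimodule of $J$ containing $J^2$, so $A\cap J = W\oplus J^2$ with $W := (A\cap J)\cap S$ a proper $kQ_0$-subbimodule of $S$; decomposing $W = \bigoplus_{(w,y)}W_{wy}$ along the isotypic components $W_{wy}\subseteq wkQ_1y$, the key point is that the subalgebras $A'$ with $A\subseteq A'\subseteq B$ are exactly those of the form $kQ_0\oplus W'\oplus J^2$ for a $kQ_0$-subbimodule $W$ with $W\subseteq W'\subseteq S$ (all products of summands land back inside; notably $W'W'\subseteq J^2$). Maximality of $A$ thus forbids any strict intermediate $W'$, which is possible only when exactly one component $W_{uv}$ is proper in $ukQ_1v$ and has codimension $1$ there; setting $U := W_{uv}$ (so $(u,v)\in V^2(Q)$) gives $A = kQ_0\oplus U\oplus\bigoplus_{(w,y)\ne(u,v)}wkQ_1y\oplus J^2 = A(u,v,U)$, as desired.

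I expect the delicate step to be the claim $J^2\subseteq A$: it is false without maximality (e.g.\ $k\subseteq k[x]/(x^3)$), so the argument must genuinely combine maximality with the dimension count producing $(A\cap J)+J^2 = J$ and the ensuing Nakayama iteration. The remaining ingredients --- the idempotent description of subalgebras of $k^n$, the two Wedderburn reductions, and the $kQ_0$-subbimodule bookkeeping --- are routine.
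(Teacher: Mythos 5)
Your proof is correct. Note that the paper itself does not prove this statement --- it is imported verbatim as Theorem 4.1 of \cite{IS} --- so there is no in-text argument to compare against; your write-up is a valid self-contained reconstruction along the expected lines (the dichotomy $A+J(B)\in\{A,B\}$, the partition description of subalgebras of $k^{|Q_0|}$, Wedderburn--Malcev to normalize the semisimple part, the forcing of $J(B)^2\subseteq A$, and the bimodule bookkeeping identifying maximal $kQ_0$-subbimodules of the arrow span). The only spot worth a word more of justification is the iteration ``$J=L+J^m$ for all $m$'': the naive substitution $J^{m+1}=J\cdot J^m$ runs into the term $J^2L\subseteq J^3$, but the step does go through, e.g.\ from $J=L+J^m$ one gets $J^2=(L+J^m)^2\subseteq L^2+LJ^m+J^mL+J^{2m}\subseteq L+J^{m+1}$ and hence $J=L+J^2\subseteq L+J^{m+1}$ (equivalently, expand $(L+J^2)^m$ and use $L^j\subseteq L$). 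With that spelled out, the argument is complete.
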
  

\noindent Following the terminology in \cite{IS}, if $A$ is $\Inn^*(B)$-conjugate to a subalgebra of the form $A(u+v)$, then we say that $A$ is of \emph{separable type}. If $A$ is $\Inn^*(B)$-conjugate to an algebra of the form $A(u,v,U)$, then we say that $A$ is of \emph{split type}. Theorem \ref{t.basic} has immediate consequences for subalgebras of basic algebras. These properties are not difficult to demonstrate, but since they are conceptually useful, we list them in a corollary below for future reference: 

 \begin{corollary}\label{c.inherit}
 Let $B$ be a basic $k$-algebra of dimension $n$, and let $A \subset B$ be a subalgebra. Then $A$ satisfies the following: 
\begin{enumerate} 
\item $A$ is also a basic algebra.
\item If $A$ is a maximal subalgebra, then $\dim_kA = n-1$.
\item If $A$ is a maximal subalgebra, then $J(A)$ is a $B$-subbimodule of $J(B)$, $J(A) = A \cap J(B)$, and $J(B)^2 \subset J(A)$.
\item More generally, if $m = \dim_kA$, then $J(B)^{2(n-m)} \subset A$. 
\end{enumerate} 
\end{corollary}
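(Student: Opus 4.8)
All four parts rest on Theorem~\ref{t.basic} together with one elementary observation: if $C$ is any subalgebra of $B$, then $C\cap J(B)$ is a nilpotent ideal of $C$ (it lies in the nilpotent ideal $J(B)$), and the inclusion $C\hookrightarrow B$ induces an injection of $C/(C\cap J(B))$ into $B/J(B)\cong k^{|Q_0|}$. The latter ring is reduced and commutative, hence so is $C/(C\cap J(B))$; being a finite-dimensional reduced commutative algebra over the algebraically closed field $k$, it is isomorphic to $k^{r}$ for some $r$, in particular semisimple. Therefore $J(C)\subseteq C\cap J(B)$, while nilpotency gives $C\cap J(B)\subseteq J(C)$, so $J(C)=C\cap J(B)$ and $C/J(C)\cong k^{r}$. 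Taking $C=A$ gives part~(1), and records the identity $J(A)=A\cap J(B)$ needed for part~(3).

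Part~(2) and the remainder of part~(3) follow from Theorem~\ref{t.basic}. For part~(2): some $\iota_{1+x}\in\Inn^{*}(B)$ carries $A$ to an algebra $A(u+v)$ or $A(u,v,U)$, each of $k$-codimension $1$ in $B$, and $\iota_{1+x}$ is a linear isomorphism, so $\dim_kA=n-1$. For part~(3) it remains to show $J(B)^{2}\subseteq J(A)$ and that $J(A)$ is a $B$-subbimodule of $J(B)$; both properties are unchanged if $A$ is replaced by $\iota_{1+x}(A)$, since $\iota_{1+x}$ fixes $J(B)$ and $J(B)^{2}$ setwise, sends $J(A)$ to $J(\iota_{1+x}(A))$, and sends $B$-subbimodules of $J(B)$ to $B$-subbimodules of $J(B)$. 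So we may assume $A=A(u+v)$ or $A=A(u,v,U)$. In the first case $J(A)=J(B)$ and both claims are trivial. In the second, $J(B)^{2}$ occurs as a direct summand of the defining formula, so $J(B)^{2}\subseteq A$ and hence $J(B)^{2}\subseteq A\cap J(B)=J(A)$; moreover $J(A)=U\oplus\bigl(\bigoplus_{(w,y)\ne(u,v)}wkQ_1y\bigr)\oplus J(B)^{2}$ is manifestly a $kQ_0$-subbimodule of $J(B)$, and $J(B)\cdot J(A),\,J(A)\cdot J(B)\subseteq J(B)^{2}\subseteq J(A)$, so $J(A)$ is a $B$-subbimodule of $J(B)$.

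For part~(4), put $c=n-m$ and induct on $c$ (the case $c=0$ being vacuous). If $c\ge 1$, choose a maximal subalgebra $C_1$ of $B$ with $A\subseteq C_1$; by parts~(1)--(2), $C_1$ is basic with $\dim_kC_1=n-1$, and $A$ has codimension $c-1$ in $C_1$. Iterating produces a chain $B=C_0\supsetneq C_1\supsetneq\cdots\supsetneq C_c=A$ in which each $C_{t+1}$ is a maximal subalgebra of the basic algebra $C_t$, so by part~(3) $J(C_t)^{2}\subseteq J(C_{t+1})$ for every $t$. Iterating this inclusion naively yields only $J(B)^{2^{c}}\subseteq J(A)\subseteq A$; to reach the sharp bound one must use the separable/split dichotomy of Theorem~\ref{t.basic}. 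Each step $C_{t+1}\subset C_t$ is of \emph{separable type}, in which case $J(C_{t+1})=J(C_t)$, or of \emph{split type}, in which case $J(C_t)/J(C_{t+1})$ is one-dimensional, spanned by an arrow of $C_t$, with $J(C_t)^{2}\subseteq J(C_{t+1})$. Tracking the quantity $\delta(C):=\min\{\,j:J(B)^{j}\subseteq J(C)\,\}$ down the chain, a separable step leaves $\delta$ unchanged; the first split step raises $\delta$ from $1$ to at most $2$ (since then $J(C_t)=J(B)$, so $J(B)^{2}=J(C_t)^2\subseteq J(C_{t+1})$); and each subsequent split step raises $\delta$ by at most $2$. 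As there are at most $c$ steps, $\delta(A)\le 2c$, i.e.\ $J(B)^{2c}\subseteq J(A)\subseteq A$.

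Parts~(1)--(3) are routine once Theorem~\ref{t.basic} is in hand; the one genuine obstacle is the final estimate of part~(4)---that a split-type step inflates $\delta$ additively (by $2$) rather than multiplicatively. The cleanest route is probably to pass to the associated graded algebra $\operatorname{gr}B=\bigoplus_j J(B)^{j}/J(B)^{j+1}$, which is generated in degrees $0$ and $1$ (so that $(\operatorname{rad}\operatorname{gr}B)^{j}=\bigoplus_{i\ge j}J(B)^{i}/J(B)^{i+1}$): the radical filtration of $B$ restricts to a filtration of each $C_t$, making $\operatorname{gr}C_t$ a graded subalgebra of $\operatorname{gr}B$, and one must control how the algebra generators of $\operatorname{gr}C_t$ can be forced into higher degrees as $t$ grows. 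Verifying that a single split step pushes at most one generator up, and by at most two degrees, is where the real work lies.
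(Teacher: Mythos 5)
Your treatment of parts (1)--(3) is correct and in fact more careful than the paper, which disposes of them with the single sentence that they ``follow immediately from Theorem \ref{t.basic}.'' Your observation that $C/(C\cap J(B))$ embeds into the reduced commutative algebra $B/J(B)\cong k^{|Q_0|}$, giving $J(C)=C\cap J(B)$ for an arbitrary subalgebra, and your verification of the subbimodule and $J(B)^2$ claims on the normal forms $A(u+v)$ and $A(u,v,U)$ (after noting everything is $\Inn^*(B)$-equivariant) are exactly the intended content.

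Part (4), however, is not proved. You have correctly diagnosed the real issue --- the paper's entire argument is ``Property 4 follows from 1--3 by induction on $n-m$,'' and, as you observe, iterating $J(C_t)^2\subseteq J(C_{t+1})$ only yields the exponent $2^{c}$ --- but your repair rests on the unproved assertion that each split step increases $\delta(C)=\min\{j:J(B)^j\subseteq J(C)\}$ by at most $2$, followed by your own admission that verifying this ``is where the real work lies.'' That assertion \emph{is} part (4), so nothing has been gained; and it cannot be pushed through one step at a time in the naive way, because for $t\ge 2$ the radical $J(C_t)$ is no longer a $B$-subbimodule of $J(B)$ (in $B=T_N\mathbb{L}_2$ with loops $x,y$ one can arrange $xy\in J(C_2)$ but $y\cdot xy\notin C_2$), so there is no bimodule identity left to exploit. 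Here is an argument that closes the gap and bypasses the chain entirely. Let $T=\{j\ge 1: J(B)^j\subseteq A+J(B)^{j+1}\}$. If $j,j'\in T$, write $u\in J(B)^j$ as $u=a+u'$ with $a\in A\cap J(B)^j$ and $u'\in J(B)^{j+1}$, and similarly $v=b+v'$ for $v\in J(B)^{j'}$; then $uv=ab+av'+u'b+u'v'\in A+J(B)^{j+j'+1}$, so $T$ is closed under addition. Since $J(B)$ is nilpotent, if $T$ contains every integer $\ge j_0$ then $J(B)^{j_0}\subseteq A$. Each $j\notin T$ gives a strict inclusion $A+J(B)^{j+1}\subsetneq A+J(B)^{j}$, whence $n-m\ge|\ZZ_{\ge 1}\setminus T|$. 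Now suppose $J(B)^{2c}\not\subseteq A$ with $c=n-m\ge 1$: some $j^*\ge 2c$ lies outside $T$, and for each $1\le i\le c$ the pair $\{i,\,j^*-i\}$ must meet $\ZZ_{\ge 1}\setminus T$ (otherwise $j^*=i+(j^*-i)\in T$); these $c$ pairs are pairwise disjoint subsets of $\{1,\dots,j^*-1\}$, so together with $j^*$ itself they force $|\ZZ_{\ge 1}\setminus T|\ge c+1$, contradicting $n-m=c$. This proves $J(B)^{2(n-m)}\subseteq A$ for any subalgebra, with no case analysis and no use of parts (1)--(3).
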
 

\begin{proof} 
Properties 1-3 follow immediately from Theorem \ref{t.basic}. Property 4 follows from 1-3 by induction on $n-m$.
\end{proof} 

\noindent For arbitrary $B$, properties 1 and 2 are false. Property 3 only holds for a suitable generalization of split-type subalgebras, although it follows from \cite{IS} that, in general, $J(B)^2 \subset J(A)$. Similar reasoning also tells us that if we have a sequence $A_m \subset A_{m-1} \subset \ldots \subset A_1 \subset A_0 = B$, such that $A_i$ is a maximal subalgebra of $A_{i-1}$ for all $i$, then $J(B)^{2m} \subset A_m$. \newline

\noindent {\bf{Note:}}  In spite of this paper's focus on basic algebras, there are good reasons to study $\AlgGr_m(B)$ when $B$ is not basic. In the next section, for instance, we will see why subalgebras of $M_n(k)$ might be of particular interest. \newline


\section{Varieties of Subalgebras}\label{s.sav}

Set $n = \dim_kB$, and fix some  $1\le m \le n$. For any two finite-dimensional $k$-vector spaces $V$ and $W$, we let $\Hom_k^{\circ}(V,W)$ denote the collection of injective $k$-linear maps $V \rightarrow W$. We think of $\Hom_k^{\circ}(V,W)$ as $(\dim_kW)\times (\dim_kV)$-matrices of full rank as needed. $\GL(V)$ acts on the right of $\Hom^{\circ}(V,W)$, and the projection $\Hom_k^{\circ}(V,W) \rightarrow \Hom_k^{\circ}(V,W)/\GL(V)$ is a geometric quotient in the category of schemes. Of course, $\Hom_k^{\circ}(V,W)/\GL(V)$ is just $ \Gr_{\dim_kV}(W)$, the Grassmannian of $\dim_kV$-dimensional subspaces of $W$. 

\begin{definition} Let $\AlgGr_m(B)$ denote the collection of all $m$-dimensional subalgebras of $B$, considered as a subset of $\Gr_m(B)$. Note that if $B$ is basic, then $\AlgGr_{n-1}(B)$ is the collection of all maximal subalgebras of $B$.
\end{definition}  

\begin{lemma}\label{AlgGrclosed}
$\AlgGr_m(B)$ is closed in $\Gr_m(B)$. In particular, $\AlgGr_m(B)$ is a projective variety.
\end{lemma}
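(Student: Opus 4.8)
The plan is to exhibit $\AlgGr_m(B)$ as the zero locus inside $\Gr_m(B)$ of a collection of algebraic conditions, using a local description of the Grassmannian. First I would fix a point $[A_0] \in \Gr_m(B)$ and work in the standard affine chart around it: choosing a complementary subspace $C$ with $B = A_0 \oplus C$, every subspace $A$ near $A_0$ is the graph of a unique linear map $f \colon A_0 \to C$, so the chart is identified with $\Hom_k(A_0, C) \cong \Aff^{m(n-m)}$. In coordinates, a basis of the corresponding $A$ is $\{\, b_i + f(b_i) \,\}$ where $\{b_i\}$ is a fixed basis of $A_0$; crucially, the structure constants of these generators depend polynomially (in fact linearly/bilinearly) on the entries of $f$.

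Next I would characterize which $A$ in this chart are subalgebras. An $m$-dimensional subspace $A \subseteq B$ is a (unital) subalgebra if and only if it contains $1_B$ and is closed under multiplication. The containment $1_B \in A$ is a linear condition in the chart. For multiplicative closure, note that $A$ is closed under multiplication iff for each pair $i,j$ the product $(b_i + f(b_i))(b_j + f(b_j)) \in B$ lies in $A$; since $A$ is itself the graph of $f$, lying in $A$ means the $C$-component equals $f$ applied to the $A_0$-component. Writing this out, each such condition becomes a polynomial equation in the entries of $f$ (degree at most $2$, coming from the product $f(b_i)\cdot$(stuff)$\,f(b_j)$-type terms), because multiplication in $B$ is bilinear and the "projection to $C$ minus $f\circ(\text{projection to }A_0)$" map is polynomial in $f$. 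Hence $\AlgGr_m(B)$ intersected with this chart is Zariski-closed.

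Then I would observe that the Grassmannian is covered by finitely many such affine charts (one for each choice of coordinate complement $C$ spanned by a subset of a fixed basis of $B$), and in each chart the same argument shows $\AlgGr_m(B)$ is closed. A subset of a variety that is closed in each member of a finite open cover is closed; therefore $\AlgGr_m(B)$ is Zariski-closed in $\Gr_m(B)$. Since $\Gr_m(B)$ is a projective variety and a closed subvariety of a projective variety is projective, $\AlgGr_m(B)$ is a projective variety.

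Alternatively — and this is the cleaner route if one prefers to avoid chart bookkeeping — I would pull back along the $\GL(A_0)$-quotient map $\pi \colon \Hom_k^{\circ}(A_0, B) \to \Gr_m(B)$ discussed just above: the condition that the image of an injective map $g \colon A_0 \to B$ be a subalgebra containing $1_B$ is $\GL(A_0)$-invariant and is cut out in $\Hom_k^{\circ}(A_0,B)$ by the vanishing of the composite "multiply two image vectors, then project modulo the image", which is polynomial in the matrix entries of $g$; since $\pi$ is a geometric quotient (in particular a submersion in the relevant sense and an open map), descending a closed invariant set gives that $\AlgGr_m(B)$ is closed. The main obstacle, such as it is, is purely bookkeeping: making precise that "project a product back into $A$" is a regular function of $f$ (resp. $g$), i.e. that the graph-parametrization interacts polynomially with multiplication; once the local coordinates are set up this is immediate, so I expect no genuine difficulty, only the need to write the equations carefully.
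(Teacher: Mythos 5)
Your proposal is correct, and your ``alternative'' route is in fact precisely the paper's argument: the paper fixes a basis to identify $U=\Hom_k^{\circ}(k^m,B)$ with the full-rank $n\times m$ matrices, and shows that the preimage $\pi^{-1}(\AlgGr_m(B))$ under the geometric quotient $\pi\colon U\to\Gr_m(B)$ is cut out by the closed conditions $1\in\col A$ and $\imag(\mu_B\mid_{\col A\otimes\col A})\subset\col A$; closedness downstairs then follows because a geometric quotient carries the quotient topology, so saturated closed sets descend to closed sets. Your primary, chart-based route (subspaces near $A_0$ as graphs of $f\colon A_0\to C$, with unitality a linear condition and multiplicative closure a polynomial condition in the entries of $f$) is a perfectly valid variant; it trades the single global frame computation for finitely many affine charts, and buys explicit affine equations at the cost of some bookkeeping. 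One small inaccuracy there: the closure condition $\mathrm{pr}_C\bigl((b_i+f(b_i))(b_j+f(b_j))\bigr)=f\bigl(\mathrm{pr}_{A_0}((b_i+f(b_i))(b_j+f(b_j)))\bigr)$ is of degree up to $3$ in the entries of $f$ (the product is quadratic and you then apply $f$ once more), not degree $2$ as you state --- but this does not affect the argument, since all that matters is polynomiality.
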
 

\begin{proof} 
Choose a basis of $B$ to identify $U:= \Hom_k^{\circ}(k^m,B)$ with the open subset of $M_{n,m}(k)$ consisting of matrices with full rank. Let $\pi : U \rightarrow \Gr_m(B)$ denote the corresponding quotient map. Since $\pi$ is a geometric quotient, it is enough to show that $ X:= \pi^{-1}(\AlgGr_m(B))$ is closed in $U$. Now, $X$ is the collection of all matrices $A \in U$ satisfying the following: 
\begin{enumerate} 
\item $1 \in \col A$.
\item $\imag (\mu_B\mid_{\col A\otimes \col A} ) \subset \col A$.
\end{enumerate} 
Here, $\col A$ denotes the column space of $A$, and $\mu_B : B\otimes B \rightarrow B$ is the multiplication map of $B$ (considered as a map $k^{n^2} \rightarrow k^n$ when needed). It is standard to check that these conditions are Zariski-closed, from which the claim follows. 
\end{proof}     

 At this point, we wish to take a quick look at $\AlgGr_m(B)$ for $B = M_d(k)$, where $d>1$. This example will show us two things: first, it will show us that these varieties encode interesting algebraic data; and second, it will give us a sense of how difficult it might be to provide a uniform description of $\AlgGr_m(B)$ for arbitrary $m$ and $B$.   
 
 \begin{definition} 
For any $A \in \AlgGr_m(B)$, define $\Iso (A,B) = \{ A' \in \AlgGr_m(B) \mid A' \cong A$ as $k$-algebras$\}$. 
\end{definition}

 \begin{proposition}\label{t.matrix}
 Let $d>1$, and let $1 \le m \le d^2$. Then points of $\AlgGr_m(M_d(k))$ may be identified with $d$-dimensional faithful representations of 
 $m$-dimensional algebras, in such a way that the following hold, for each $A \in \AlgGr_m(M_d(k))$: 
 \begin{enumerate} 
 \item $\Iso (A,M_d(k))$ is the collection of all faithful $d$-dimensional $A$-modules;
 \item $A' \in \Iso (A,M_d(k))$ is conjugate to $A$ if and only if they correspond to isomorphic $A$-modules.
 \end{enumerate}
 \end{proposition}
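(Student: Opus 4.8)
The plan is to build the identification by interpreting an $m$-dimensional subalgebra $A \subseteq M_d(k)$ as an $m$-dimensional abstract algebra together with a faithful action on $k^d$. First I would set up the correspondence: given $A \in \AlgGr_m(M_d(k))$, the inclusion $A \hookrightarrow M_d(k) = \End_k(k^d)$ makes $k^d$ into a left $A$-module, and this module is faithful precisely because the structure map $A \to \End_k(k^d)$ is injective (it is the inclusion). Conversely, given an abstract $m$-dimensional algebra $C$ and a faithful $C$-module structure on $k^d$, the corresponding algebra homomorphism $C \to \End_k(k^d) = M_d(k)$ is injective, so its image is an $m$-dimensional subalgebra of $M_d(k)$; and every algebra isomorphic to $A$ arises this way. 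One should be slightly careful here to phrase ``$d$-dimensional faithful representations of $m$-dimensional algebras'' correctly: the data is a pair (abstract $m$-dimensional algebra $C$, faithful left $C$-module $V$ with $\dim_k V = d$), considered up to the obvious equivalence (isomorphism of $C$ inducing compatible isomorphism of modules), and the claim is that $\AlgGr_m(M_d(k))$ is exactly the set of such pairs where moreover $C$ has been identified with its image, i.e. $C$ is literally a subalgebra of $M_d(k)$ and $V = k^d$ with the tautological action.

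With the dictionary in place, part (1) is essentially a restatement: $A' \in \AlgGr_m(M_d(k))$ is isomorphic to $A$ as a $k$-algebra iff $A'$ is the image of some injective homomorphism $A \to M_d(k)$, which is the same as endowing $k^d$ with a faithful $A$-module structure (pull back the tautological $A'$-action along the isomorphism $A \xrightarrow{\sim} A'$); so $\Iso(A, M_d(k))$ is in bijection with the faithful $d$-dimensional $A$-modules, and under the identification it literally \emph{is} that collection. I would phrase this as: fix once and for all an isomorphism type, then $\Iso(A,M_d(k)) = \{\,\varphi(A) : \varphi \in \Hom^{\circ}_{\text{alg}}(A, M_d(k))\,\}$ where $\Hom^{\circ}_{\text{alg}}$ denotes injective algebra maps, and an injective algebra map $A \to M_d(k)$ is exactly a faithful $A$-module structure on $k^d$.

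For part (2), the key point is that two injective algebra homomorphisms $\varphi, \varphi' : A \to M_d(k)$ have the same image iff they differ by an automorphism of $A$ precomposed, i.e. $\varphi' = \varphi \circ \alpha$ for some $\alpha \in \Aut_k(A)$; and the images $\varphi(A)$, $\varphi'(A)$ are conjugate in $M_d(k)$ iff $\varphi' = \iota_g \circ \varphi$ for some $g \in \GL_d(k)$ (using the Noether--Skolem theorem, or rather just the elementary fact that conjugating the image is conjugating the map, since every automorphism of $M_d(k)$ is inner). Translating through the dictionary: $\varphi$ and $\varphi'$ give $A$-module structures $V_\varphi$, $V_{\varphi'}$ on $k^d$, and $V_\varphi \cong V_{\varphi'}$ as $A$-modules means exactly that there is $g \in \GL_d(k)$ with $g \varphi(a) g^{-1} = \varphi'(a)$ for all $a \in A$, i.e. $\varphi'(A) = g\varphi(A)g^{-1}$. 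So $A' \in \Iso(A,M_d(k))$ is conjugate to $A$ iff the corresponding modules are isomorphic. I expect the only real subtlety — and the main thing to state carefully rather than a genuine obstacle — is bookkeeping the difference between ``same subalgebra'' and ``isomorphic module'': distinct faithful $A$-module structures on $k^d$ can have the same image subalgebra (when they differ by an automorphism of $A$ that is \emph{not} realized by conjugation), so the identification in the statement is of \emph{points of $\AlgGr_m$} with \emph{modules}, and the conjugation relation on the left corresponds to the isomorphism relation on the right only after this is untangled; everything else is routine once Noether--Skolem and the universal property of the quotient from Lemma \ref{AlgGrclosed} are invoked.
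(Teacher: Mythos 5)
Your proposal is correct and follows essentially the same route as the paper: the same dictionary between subalgebras of $M_d(k)$ and faithful $d$-dimensional representations, Skolem--Noether to reduce $\Aut_k(M_d(k))$-conjugacy to $\GL_d(k)$-conjugacy, and the observation that an $A$-module isomorphism $g$ between two structures is exactly an element with $g\varphi(a)g^{-1}=\varphi'(a)$. The only loose spot is the claim that conjugate images forces $\varphi'=\iota_g\circ\varphi$ (one must also allow precomposition by an automorphism of $A$), but your closing paragraph explicitly untangles this, and in fact treats the same-image-versus-isomorphic-module bookkeeping more carefully than the paper does.
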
  
 
 \begin{proof} 
 Let $A$ be an $m$-dimensional $k$-algebra. Then a $d$-dimensional faithful module is the same as a $d$-dimensional vector space $V$, along with a $k$-algebra injection $A \hookrightarrow \End_k(V)$. After fixing a basis for $V$, this is the same as a $k$-algebra injection $A \hookrightarrow M_d(k)$. A different faithful $d$-dimensional representation, call it $V'$, will yield another injection $A \hookrightarrow \End_k(V') \cong M_d(k)$, whose image is isomorphic to $A$. Every subalgebra of $M_d(k)$ isomorphic to $A$ arises in the way, and so elements of $\AlgGr_m(M_d(k))$ correspond bijectively to $d$-dimensional faithful representations of $m$-dimensional algebras. For claim (1), recall that $\Aut_k(M_d(k)) = \Inn(M_d(k)) \cong \GL_d(k)/k^{\times}$ by the Skolem-Noether Theorem. If $g \in \GL_d(k)$, then restriction along $\phi:= \iota_g\mid_A : A \rightarrow gAg^{-1}$ replaces the faithful $A$-module $k^d$ by the faithful $A$-module $\Res_{\phi}(k^d)$. But $\Res_{\phi}(k^d) \cong k^d$ as $A$-modules under $g$, considered as an endomorphism of $k^d$. Conversely, suppose that $A' \in \AlgGr_m(M_d(k))$ is given, such that $A \cong A'$ and the $A$-module structure on $k^d$ induced by $A'$ is isomorphic to the structure defining $A$ itself. This means that there exists a $k$-algebra isomorphism $\psi : A \rightarrow A'$ such that $\Res_{\psi}(k^d) \cong k^d$ as $A$-modules. In other words, there is an invertible $k$-linear map $g \in \GL_d(k)$ such that for all $a \in A$ and $v \in k^d$, $g(\psi(a)v) = ag(v)$. But since $k^d$ is faithful, this implies $g\psi(a) = ag$ as endomorphisms on $k^d$, and hence $\psi = \iota_{g^{-1}}\mid_A$. 
 \end{proof} 
 
 \noindent {\bf{Note:}} Theorem \ref{t.matrix} implies that $\AlgGr_m(B)$ is not always a finite union of $\Aut_k(B)$-orbits. For instance, consider the three-dimensional algebra $A = k[x,y]/(x^2,xy,y^2)$. $A$ has infinite representation type. In fact, it is easy to see that for each $\lambda \in k$, the map $(x,y) \mapsto \left(\left(\begin{array}{cc} 0 & 1 \\ 0 & 0 \end{array} \right), \left(\begin{array}{cc} 0 & \lambda \\ 0 & 0 \end{array} \right) \right)$ defines a $2$-dimensional indecomposable $A$-module $M_{\lambda}$, and $M_{\lambda} \cong M_{\mu}$ if and only if $\lambda = \mu$. But then $\{ M_{\lambda} \oplus A\}\mid_{\lambda \in k}$ is an infinite collection of pairwise non-isomorphic, $5$-dimensional faithful $A$-modules. In particular, $\AlgGr_3(M_5(k))$ is not a finite union of $\Aut_k(M_5(k))$-orbits. It may amuse the reader to note that $\AlgGr_3(M_2(k))$ \emph{is} a finite union of $\Aut_k(M_2(k))$-orbits.   \newline  
 
 Isomorphism classes of subalgebras of $B$ are generally not open or closed, but they are constructible in $\AlgGr_m(B)$. This may not surprise the reader, since they are clearly $\Aut_k(B)$-invariant, and hence a union of $\Aut_k(B)$-orbits. Note, however, that the example above shows that this fact alone does not imply constructibility, since isoclasses might be (uncountably) infinite unions of $\Aut_k(B)$-orbits. To prove the constructibility of isoclasses, we rely on a well-known theorem of Chevalley: 

\begin{theorem}[Chevalley]\label{t.chevalley} 
Let $f : X \rightarrow Y$ be a morphism of schemes. Assume that $f$ is quasi-compact and locally of finite presentation, and that $Y$ is quasi-compact and quasi-separated. Then the image of every constructible subset of $X$ under $f$ is a constructible subset of $Y$.
\end{theorem}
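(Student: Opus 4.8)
This is a standard result (it is EGA IV$_1$, 1.8.4, and in the paper one would simply cite it); the plan is to recall the proof, reducing to the classical Noetherian statement and then arguing by Noetherian induction. First I would dispose of the topology. Constructibility of a subset of $Y$ can be checked on a finite affine open cover, and since $f$ is quasi-compact with $Y$ quasi-compact and quasi-separated, I may assume $Y=\operatorname{Spec} A$ and that $X$ is covered by finitely many affines $\operatorname{Spec} B_j$ with each $B_j$ a finitely presented $A$-algebra. A constructible subset of $X$ is a finite union of locally closed subsets, and taking images commutes with finite unions, so it suffices to treat the image of a single such piece; intersecting with the cover, realizing it inside some $\operatorname{Spec} B_j$ as $D(f)\cap V(g_1,\dots ,g_r)$ (a finitely generated ideal suffices since only the underlying set matters), and noting that $(B_j)_f/(g_1,\dots ,g_r)$ is again finitely presented over $A$, I reduce to the following: given a finitely presented $A$-algebra $C$, show $E:=\operatorname{im}(\operatorname{Spec} C\to\operatorname{Spec} A)$ is constructible.

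Next I would remove the Noetherian hypothesis by approximation. Write $A=\varinjlim_i A_i$ over its finitely generated $\mathbb{Z}$-subalgebras $A_i$, which are Noetherian, and descend $C$ to a finitely presented $A_i$-algebra $C_i$ with $C\cong C_i\otimes_{A_i}A$ for $i$ large. For a prime $\mathfrak p$ of $A$ lying over $\mathfrak q\subset A_i$ one has $C\otimes_A\kappa(\mathfrak p)\cong C_i\otimes_{A_i}\kappa(\mathfrak p)$, which is nonzero iff $C_i\otimes_{A_i}\kappa(\mathfrak q)\neq 0$; hence $E$ is exactly the preimage of $\operatorname{im}(\operatorname{Spec} C_i\to\operatorname{Spec} A_i)$ under $\operatorname{Spec} A\to\operatorname{Spec} A_i$. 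Since the preimage of a constructible set under any morphism of schemes is constructible, it suffices to prove the statement when $A$ is Noetherian.

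The Noetherian case then goes by Noetherian induction on the closed subsets of $\operatorname{Spec} A$; replacing $A$ by $A_{\mathrm{red}}$ changes nothing topologically. If $A$ is not a domain, its finitely many irreducible components are proper closed subschemes, $E$ is the finite union of the images of $\operatorname{Spec} C$ restricted to each, and we conclude by the induction hypothesis. So assume $A$ is a domain with fraction field $K$. If $C\otimes_A K=0$, then $C$ is killed by some nonzero $g$, so $\operatorname{Spec} C$ maps into the proper closed set $V(g)$ and we again conclude by induction. Otherwise the generic point of $\operatorname{Spec} A$ lies in $E$, and by Grothendieck's generic freeness lemma there is a nonzero $g\in A$ with $C_g$ a free $A_g$-module; being nonzero, it is faithfully flat, so $\operatorname{Spec} C_g\to\operatorname{Spec} A_g$ is surjective and $D(g)\subseteq E$. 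Then $E=D(g)\cup(E\cap V(g))$, and $E\cap V(g)=\operatorname{im}(\operatorname{Spec}(C/gC)\to\operatorname{Spec}(A/gA))$ is constructible by Noetherian induction since $V(g)$ is a proper closed subset. Hence $E$ is constructible.

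The essential content lies entirely in the first two reductions. The delicate point is verifying that scheme-theoretic image commutes with base change along $\operatorname{Spec} A\to\operatorname{Spec} A_i$ in the precise form used above, and that the locally closed pieces we reduce to are genuinely finitely presented over $A$ — this is exactly where the hypothesis that $f$ be \emph{locally of finite presentation} (rather than merely locally of finite type) is needed, so that the Noetherian approximation step applies. Once one is in the Noetherian setting the argument is short, its only non-formal input being the generic freeness lemma.
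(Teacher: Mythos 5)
The paper does not prove this statement at all: it is quoted as a classical result of Chevalley (EGA IV$_1$, 1.8.4; Stacks Project Tag 054K) and used as a black box to deduce constructibility of $\Iso(A,B)$. Your argument --- reduction to the affine case, Noetherian approximation via writing $A$ as a filtered colimit of finitely generated $\ZZ$-subalgebras, and then Noetherian induction powered by generic freeness --- is the standard proof of exactly this statement and is correct, so there is nothing in the paper to compare it against.
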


\noindent It is easy to check that $\pi: U \rightarrow \Gr_m(B)$ of lemma \ref{AlgGrclosed} satisfies these conditions. To prove the constructibility of isoclasses, we introduce a few extra definitions. 


\begin{definition} 
Let $\pi : U \rightarrow \Gr_m(B)$ be the quotient map from before. For any $A \in \AlgGr_m(B)$, an element $F \in \pi^{-1}(A)$ is called a \emph{frame} of $A$.
\end{definition} 

\begin{definition} 
Let $F$ be a fixed frame for $A \in \AlgGr_m(B)$, with columns $f_1,\ldots , f_m$. Define $I(F)$ to be the group of all $g \in \GL(B)$ such that $g\cdot\mu_B(f_i,f_j) = \mu_B(g\cdot f_i, g\cdot f_j)$ for all $i,j \in \{ 1,\ldots , m\}$. 
\end{definition} 

\noindent  Of course, $I(F)$ 
is a closed subgroup of $\GL(B)$, and $I(F)$ contains $\Aut_k(B)$. Note that $I(F)$ depends \emph{a priori} on the frame chosen for $A$. 


\begin{proposition} 
Fix $A \in \AlgGr_m(B)$ and $F \in \pi^{-1}(A)$ with columns $f_1,\ldots , f_m$. Then $A' \in \Iso (A,B)$ if and only if for any frame $F'$ of $A'$, there exist $g \in I(F)$ and $\phi \in \GL_m(k)$ such that $F' = g\cdot F \cdot \phi^{-1}$. Consequently, $\Iso (A,B)$ is a constructible subset of $\AlgGr_m(B)$.
\end{proposition}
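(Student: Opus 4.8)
The plan is to prove the two halves of the biconditional separately, and then derive constructibility from the "only if" direction via Chevalley's theorem. For the easy direction, suppose $F' = g\cdot F\cdot\phi^{-1}$ for some $g\in I(F)$ and $\phi\in\GL_m(k)$. Since $g\in\GL(B)$, the columns of $F'$ are a basis of $g(A)$, so $A' = g(A)$. The defining property of $I(F)$ says exactly that $g$ carries the structure constants of $A$ read off in the basis $f_1,\dots,f_m$ to the structure constants of $A'$ read off in the basis $g\cdot f_1,\dots, g\cdot f_m$; and passing from $F'$ to $g\cdot F$ by the change of basis $\phi$ does not change the isomorphism type of the resulting algebra. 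Hence the $k$-algebra determined by $F'$ (namely $A'$) is isomorphic to the $k$-algebra determined by $F$ (namely $A$), so $A'\in\Iso(A,B)$.

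For the converse, suppose $A'\in\Iso(A,B)$, and let $\psi\colon A\to A'$ be a $k$-algebra isomorphism. Fix any frame $F'$ of $A'$ with columns $f_1',\dots,f_m'$. Define a linear map $h\colon A\to A'$ by $h(f_i) = f_i'$, i.e. $h$ is the map whose matrix in these bases is the identity; then $F' = h\cdot F$ as matrices (viewing $h$ as an element of $\Hom_k^\circ(A, A')\subset M_{n}(k)$ after fixing the ambient basis of $B$). The composite $\psi^{-1}\circ h\colon A\to A$ is a linear automorphism of $A$, with some matrix $\phi\in\GL_m(k)$ in the basis $f_1,\dots,f_m$; so $h = \psi\circ(\text{the map with matrix }\phi)$, which means $F' = h\cdot F = (\widetilde\psi)\cdot F\cdot\phi^{-1}$ where $\widetilde\psi$ is the linear map $A\to A'$ whose matrix in the bases $(f_1,\dots,f_m)$, $(f_1',\dots,f_m')$... — the point is rather that we want $g\in I(F)$ with $g\cdot F\cdot\phi = F'$ for a suitable $\phi$. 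So take $\phi\in\GL_m(k)$ to be the matrix of $\psi$ relative to the bases $(f_i)$ of $A$ and $(f_i')$ of $A'$, so that $\psi(f_i) = \sum_j \phi_{ji} f_j'$; equivalently the map $A\to A'$, $f_i\mapsto f_i'$ has matrix $F'$ and equals $\psi$ followed by the $\GL_m$-change-of-basis $\phi^{-1}$, giving $F' = \psi\cdot F\cdot\phi^{-1}$ with $\psi$ regarded as an $n\times n$ matrix via any extension to $B\to B$. It remains to choose that extension to lie in $I(F)$: since $\psi$ is a $k$-algebra isomorphism $A\to A'$, for $x, y\in A$ we have $\psi(\mu_B(x,y)) = \mu_B(\psi(x),\psi(y))$, and extending $\psi$ to any linear automorphism $g$ of $B$ (possible since $A, A'$ have equal dimension, so bases extend) preserves this identity on the spanning vectors $f_i$; hence $g\in I(F)$ and $F' = g\cdot F\cdot\phi^{-1}$, as required.

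For constructibility, apply the characterization just proved. Consider the morphism
\begin{equation*}
\Theta\colon I(F)\times \GL_m(k)\longrightarrow U,\qquad (g,\phi)\longmapsto g\cdot F\cdot\phi^{-1},
\end{equation*}
which is a morphism of varieties (even of schemes) since it is given by polynomial formulas in the matrix entries and their inverses. Its image is precisely $\pi^{-1}(\Iso(A,B))$: by the biconditional, a frame $F'$ lies in the image iff the subalgebra it spans is in $\Iso(A,B)$. Now $I(F)\times\GL_m(k)$ is a variety, hence in particular a quasi-compact scheme and a constructible subset of itself, and $\Theta$ is quasi-compact and of finite presentation (a morphism of finite-type $k$-schemes), so by Chevalley's theorem (Theorem \ref{t.chevalley}) the image $\pi^{-1}(\Iso(A,B))$ is a constructible subset of $U$. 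Since $\pi\colon U\to\Gr_m(B)$ restricts to $\pi\colon U\to\AlgGr_m(B)$ which is itself quasi-compact and of finite presentation with $\AlgGr_m(B)$ quasi-compact and quasi-separated, applying Chevalley once more shows $\Iso(A,B) = \pi(\pi^{-1}(\Iso(A,B)))$ is constructible in $\AlgGr_m(B)$.

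The main obstacle is the bookkeeping in the converse direction: correctly tracking which maps are $n\times n$ matrices acting on $B$, which are $m\times m$ matrices acting on $k^m$, and which are the $n\times m$ frame matrices, and verifying that an isomorphism $\psi\colon A\to A'$ of abstract $k$-algebras genuinely extends to an element of $I(F)\subset\GL(B)$ — this uses only that $\dim_k A = \dim_k A' = m$ so that a linear isomorphism $A\to A'$ prolongs to a linear automorphism of $B$, together with the observation that the $I(F)$-condition only constrains $g$ on the vectors $f_1,\dots,f_m$. Everything else (the forward direction, the polynomiality of $\Theta$, the hypotheses of Chevalley's theorem) is routine.
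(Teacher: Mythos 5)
Your proof is correct and follows essentially the same route as the paper: extend the algebra isomorphism $\psi\colon A\to A'$ to a linear automorphism $g$ of $B$, observe that the $I(F)$-condition only constrains $g$ on the $f_i$ (where it holds because $\mu_B(f_i,f_j)\in A$ and $g|_A=\psi$ is multiplicative), absorb the choice of frame into the $\GL_m(k)$-factor, and deduce constructibility by writing $\pi^{-1}(\Iso(A,B))$ as the image of $I(F)\times\GL_m(k)$ and applying Chevalley's theorem to $\pi$. The bookkeeping in your converse direction is more laborious than the paper's (which simply notes that any two frames of $A'$ lie in the same $\GL_m(k)$-orbit, so it suffices to treat the frame $\gamma(F)$), but the argument is the same.
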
  

\begin{proof} 
It is clear that $A' \cong A$ if any frame of $A'$ is of the form $g\cdot F\cdot \phi^{-1}$, for some $g \in I(F)$ and $\phi \in \GL_m(k)$. So assume that $\gamma : A \rightarrow A'$ is a $k$-algebra isomorphism. Then $\gamma(F) := (\gamma (f_1) \mid \cdots \mid \gamma (f_m) )$ is a frame for $A'$. Since any two frames for $A'$ are in the same $\GL_m(k)$-orbit of $U$, it is enough to show that $\gamma(F) = g\cdot F$ for some $g \in I(F)$. Pick ordered bases $\beta = \{ f_1,\ldots , f_m , f_{m+1},\ldots , f_n\}$ and $\gamma (\beta) := \{ \gamma (f_1),\ldots , \gamma (f_m), h_{m+1}, \ldots , h_n\}$ for $B$, and define $\hat{\gamma}(f_i) = \gamma (f_i)$ for $1 \le i \le m$ and $\hat{\gamma}(f_{m+j} ) = h_{m+j}$ for $1 \le j \le n-m$. Then if $g$ denotes the matrix corresponding to $\hat{\gamma}$ with respect to the fixed basis chosen for $B$, $g \in I(F)$ and $\gamma (F) = g\cdot F$. From this, it follows that $\pi^{-1}(\Iso (A,B)) = \left[I(F)\times \GL_m(k)\right]\cdot F$ is constructible. By theorem \ref{t.chevalley}, $\Iso (A,B)$ is constructible as well.
\end{proof}   



\noindent We now begin our study of subalgebra varieties of basic algebras. We first note a straightforward consequence of corollary \ref{c.inherit}:

\begin{lemma}\label{t.red} 
Let $B = kQ/I$. Then $\AlgGr_m(B) \cong \AlgGr_{\overline{m}}(B/J(B)^{2(n-m)})$ as a projective variety, where $\overline{m} = \dim_kB/J(B)^{2(n-m)} -n+m$.
\end{lemma}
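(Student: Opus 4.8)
The plan is to exhibit an explicit isomorphism of projective varieties by identifying subalgebras of $B$ with subalgebras of the quotient $\bar B := B/J(B)^{2(n-m)}$ that contain the (nilpotent, two-sided) ideal $N := J(B)^{2(n-m)}$. First I would recall from Corollary \ref{c.inherit}(4) that every $m$-dimensional subalgebra $A \subseteq B$ satisfies $N = J(B)^{2(n-m)} \subseteq A$. Consequently the canonical projection $q : B \to \bar B$ restricts to a bijection between $\{\, A \in \AlgGr_m(B)\,\}$ and $\{\, \bar A \in \AlgGr_{\bar m}(\bar B) \mid \bar A \supseteq q(N) = \{0\}\,\}$ — but since $q(N) = 0$ this is simply the set of \emph{all} $\bar m$-dimensional subalgebras of $\bar B$, where $\bar m = \dim_k \bar A = \dim_k A - \dim_k N = m - (n - \dim_k \bar B) = \dim_k\bar B - n + m$, matching the stated formula. (One checks that $q(A)$ is a subalgebra because $q$ is an algebra map, that it is $\bar m$-dimensional because $N \subseteq A$, and that $A = q^{-1}(q(A))$ because $N = \ker q \subseteq A$; conversely $q^{-1}(\bar A)$ is an $m$-dimensional subalgebra of $B$ containing $N$.)

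The content of the lemma is that this set-theoretic bijection is an isomorphism of projective varieties, so the next step is to upgrade it geometrically. I would work at the level of the frame spaces used in Lemma \ref{AlgGrclosed}. Fix a basis of $B$ adapted to the flag $N \subseteq B$, i.e. whose last $\dim_k N$ vectors span $N$; this induces a basis of $\bar B$ from the first $\dim_k\bar B$ vectors. The map $q$ is then a fixed surjective linear map $k^n \to k^{\dim_k\bar B}$, and the induced morphism $\Gr_m(B) \dashrightarrow \Gr_{\bar m}(\bar B)$ sending $V \mapsto q(V)$ is defined and regular precisely on the locus of subspaces $V$ with $\dim_k q(V) = \bar m$, equivalently $V \cap N$ of the expected dimension; this locus is open and contains $\AlgGr_m(B)$. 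In the other direction, $\bar A \mapsto q^{-1}(\bar A)$ is the everywhere-defined regular map $\Gr_{\bar m}(\bar B) \to \Gr_m(B)$ induced by pulling back subspaces along the surjection $q$ (concretely, on frames: append a fixed frame of $N$ to a lift of a frame of $\bar A$). These two morphisms are mutually inverse on the relevant loci, and both restrict to the subsets $\AlgGr_m(B)$ and $\AlgGr_{\bar m}(\bar B)$ by the bijection established above, giving the desired isomorphism.

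Concretely, to make the regularity transparent I would phrase everything through the geometric quotients $\pi_B : U_B \to \Gr_m(B)$ and $\pi_{\bar B} : U_{\bar B} \to \Gr_{\bar m}(\bar B)$ of Lemma \ref{AlgGrclosed}. Fix once and for all a frame $F_N \in M_{n, \dim_k N}(k)$ whose columns are the chosen basis of $N$. Define $\Psi : U_{\bar B} \to U_B$ by $\bar F \mapsto (\,\tilde F \mid F_N\,)$, where $\tilde F$ is the canonical lift of $\bar F$ (insert zero rows in the positions indexed by $N$); this is a morphism, it is $\GL(\cdot)$-equivariant up to the block structure, and it descends to a morphism $\psi : \Gr_{\bar m}(\bar B) \to \Gr_m(B)$ landing inside $\AlgGr_m(B)$ when the source point lies in $\AlgGr_{\bar m}(\bar B)$ (because $q^{-1}(\bar A)$ is a subalgebra, using that $q$ is an algebra homomorphism and $N$ is a two-sided ideal). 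In the reverse direction, on the open set where $q$ restricted to the column space is surjective, $A \mapsto q(A)$ is the morphism induced by the linear map $q$, and one verifies $\psi$ and this map are mutually inverse by direct computation on frames. Restricting to $\AlgGr_m(B)$ and $\AlgGr_{\bar m}(\bar B)$ yields the isomorphism.

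I expect the main obstacle to be bookkeeping rather than conceptual: one must be careful that $A \mapsto q(A)$ genuinely is a morphism (not merely a bijection) on the relevant open locus, which requires noting that $\dim_k q(V)$ is lower-semicontinuous and is \emph{constantly} $\bar m$ on $\AlgGr_m(B)$ — this is exactly where Corollary \ref{c.inherit}(4) is essential, since without the containment $N \subseteq A$ the dimension of $q(A)$ could jump. A secondary point to check cleanly is that the $\GL_m(k)$- and $\GL_{\bar m}(k)$-actions are respected so that the descent to Grassmannians is well-defined; this is routine but should be stated. Once these are in hand, the identification of the two subalgebra varieties and the dimension count for $\bar m$ follow immediately.
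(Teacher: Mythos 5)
Your proposal is correct and follows the same route as the paper: the paper's proof simply invokes Corollary \ref{c.inherit}(4) to get $J(B)^{2(n-m)} \subseteq A$ and declares that the correspondence $A \leftrightarrow A/J(B)^{2(n-m)}$ gives the isomorphism. Your write-up supplies the geometric details (regularity on frame spaces, the inverse $\bar A \mapsto q^{-1}(\bar A)$) that the paper leaves implicit, and these checks are all sound.
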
 

\begin{proof} 
By corollary \ref{c.inherit}, $J(B)^{2(n-m)} \subset A$ for all $A \in \AlgGr_m(B)$. Therefore, the correspondence $A \leftrightarrow A/J(B)^{2(n-m)}$ provides an equivalence $\AlgGr_m(B) \cong \AlgGr_{\bar{m}}(B/J(B)^{2(n-m)})$, as we wished to show.
\end{proof} 

\noindent {\bf{Note:}} Applying lemma \ref{t.red} to maximal subalgebras of $B = kQ/I$, we find that $\AlgGr_{n-1}(B)$ is isomorphic to $ \AlgGr_{\dim_kB/J(B)^2-1}{B/J(B)^2}$. But note that the algebra $B/J(B)^2$ is isomorphic to $ T_2Q$, so that $\AlgGr_{n-1}(B)$ \emph{only depends on the underlying quiver of $B$}. Therefore, we make the following definition:

\begin{definition}\label{msaDef}
$\msa (Q):= \AlgGr_{n-1}(T_2Q)$ where $\dim_kT_2Q = n$. We can think of $\msa (Q)$ as the variety of maximal subalgebras of \emph{any} algebra of the form $B = kQ/I$, for some admissible ideal $I \subset kQ$. The action of $\Aut_k(B)$ on $\msa (Q)$ factors through the projection map $\Aut_k(B) \rightarrow \Aut_k(B/J(B)^2) \cong \Aut_k(T_2Q)$.
 \end{definition} 

 Lemma \ref{t.red} shows that $\AlgGr_m(B) \cong \AlgGr_{m'}(B')$ does not imply $m = m'$ or $B \not\cong B'$. This problem is particularly acute for maximal subalgebras of basic algebras, where the corresponding variety for \emph{any} admissible quotient of $kQ$ is isomorphic to $\msa (Q)$. However, one might try to recover the subalgebra structure of $B = kQ/I$ from $\msa (Q)$ by studying its $\Aut_k(B)$-orbits and isoclasses. Since $\Aut_k(B)$ acts on $\msa (Q)$ through its image under the map $\Aut_k(B) \rightarrow \Aut_k(B/J(B)^2)$, this is really a special case of the following general problem:


\begin{problem}  
Suppose that $B$ is a finite-dimensional algebra, and that $G$ is a subgroup of $\Aut_k(B)$. Classify the $G$-orbits of $\AlgGr_m(B)$. 
\end{problem} 

\noindent This problem is too general for us to expect any reasonable answer, even for basic algebras. However, in section \ref{s.iso} we discuss several interesting conditions on an admissible ideal $I \subset kQ$, which guarantee that $\msa (Q)$ is at least a finite union of $\Aut_k(B)$-orbits. 


\section{The Structure of $\msa (Q)$}\label{s.msa}  

In this section, we provide an explicit description for the variety $\msa (Q)$ introduced in the previous section. Specifically, we construct a closed embedding $\msa (Q) \hookrightarrow \mathbb{P}^N$ for $N = |Q_0| + |Q_1| -2$, and find a finite set of generators for the homogeneous vanishing ideal of $\msa (Q)$ in $k[\mathbb{P}^N]$. We then describe the irreducible components of $\msa (Q)$.

For the next theorem, let $B = T_2Q$ for some quiver $Q$, with $Q_0 = \{ v_0, v_1,\ldots , v_{n_0}\}$, and $Q_1 = \{ \alpha_1,\ldots , \alpha_{n_1}\}$. Let $Q_0$ and $Q_1$ be ordered as in Section \ref{s.bg}. Extend these orderings to a total ordering on $Q_0\cup Q_1$ by defining $v< \alpha$ for all $v \in Q_0$ and $\alpha \in Q_1$. We will freely treat subsets of $Q_0\cup Q_1$ as totally ordered sets under the induced ordering. Let $V = \Sp_k(Q_0\setminus\{ v_0 \} \cup Q_1)$, and let $R= k[V] = k[v_1,\ldots , v_{n_0},\alpha_1,\ldots , \alpha_{n_1}]$ denote the homogeneous coordinate ring of the isomorphic projective varieties: 
\begin{equation*}
\Gr_{n_0+n_1-1}(V) \cong \PP(V^*) \cong \PP (V).  
\end{equation*} 

 Recall that if we consider an element $x \in Q_0\setminus\{v_0\} \cup Q_1$ as an element of $R$, then the statement ``$x$ vanishes on $A \in \Gr_{n_0+n_1-1}(V)$'' is equivalent to the statement ``$x(F) = 0$ for any frame $F$ of $A$.'' Here, a frame of $A$ is simply an $(n_0+n_1)\times (n_0+n_1-1)$-matrix whose columns form an ordered basis for $A$, and $x(F)$ is the $(n_0+n_1-1)\times(n_0+n_1-1)$-minor of $F$ formed by deleting the $x^{th}$-row. Note that if $F'$ is obtained from $F$ by column operations, then $x(F) = 0$ implies $x(F') = 0$. Also recall that $\Gr_{n_0+n_1-1}(V)$ can be decomposed into Schubert cells indexed by $(n_0+n_1-1)$-element subsets of $Q_0\setminus\{v_0\} \cup Q_1$. For our purposes, it will be easier to index the Schubert cells by $1$-element subsets instead, i.e. by taking the complement of a given $(n_0+n_1-1)$-element subset. With this convention, if $x \in Q_0\setminus\{v_0\} \cup Q_1$ is any element, then its corresponding Schubert cell is the collection of all $A \in \Gr_{n_0+n_1-1}(V)$ with the following property: any frame $F$ of $A$ is column-equivalent to a reduced-column echelon matrix with pivots on every row except for the $x$-row.

The basis $Q_0 \cup Q_1$ for $B$ allows us to fix a $k$-vector space isomorphism $B \rightarrow B^*$. The dual vector of $x \in Q_0 \cup Q_1$ will be denoted $x^*$. More generally, for any vector $x = \sum_{v \in Q_0}{\lambda_vv} + \sum_{\alpha\in Q_1}{\lambda_{\alpha}\alpha}$ in $B$, we let $x^*$ denote its image $\sum_{v \in Q_0}{\lambda_vv^*}+\sum_{\alpha \in Q_1}{\lambda_{\alpha}\alpha^*}$ in $B^*$. Finally, if $S$ is any finite set, and $\sigma : S \rightarrow \ZZ_{\geq 0}$ is function taking values in the non-negative integers, we let $|\sigma | = \sum_{x \in S}{\sigma (x)}$ and $\supp (\sigma) = \{ x \in S \mid \sigma (x) \neq 0\}$.

\begin{theorem}\label{t.msa(Q)}
There is a closed embedding $\msa (Q) \hookrightarrow \Gr_{n_0+n_1-1}(V)$, whose image is the vanishing set of the ideal $X = X_0 + X_{1/2} + X_1 \subset R$, where:
\begin{enumerate} 
\item $X_0$ is the ideal generated by the set $\{ v_i^2v_j - (-1)^{j-i-1}v_iv_j^2 \mid 1 \le i < j \le n_0 \} \cup \{ v_iv_jv_k \mid 1 \le i < j < k \le n_0\}$,  
\item $X_{1/2}$ is the ideal generated by the sets $\{\alpha_iv_j \mid v_j \not\in \{ s(\alpha_i),t(\alpha_i)\} \}$, $\{ (v_k - (-1)^{k-j-1}v_j)\alpha_i \mid 1\le j < k \le n_0, \alpha_i \in v_jQ_1v_k \}$, and $\{ v\alpha \mid s(\alpha) = v = t(\alpha) \}$, 
\item $X_1$ is the ideal generated by the set $\{ \alpha_i\alpha_j \mid \alpha_i$ is not parallel to $\alpha_j \}$.
\end{enumerate}
\end{theorem}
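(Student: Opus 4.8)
The plan is to set up an explicit coordinate chart on $\Gr_{n_0+n_1-1}(V) \cong \PP(V^*)$ and directly translate the condition ``$A$ is a subalgebra of $B = T_2Q$'' into polynomial equations in $R = k[v_1,\ldots,v_{n_0},\alpha_1,\ldots,\alpha_{n_1}]$. First I would recall that every maximal subalgebra $A$ of $B$ has $J(B)^2 = 0$ forcing nothing extra here (since $B = T_2Q$), but by Corollary \ref{c.inherit}(3), $A$ contains $J(B)^2 = \{0\}$ trivially and $J(A) = A \cap J(B)$, $\dim_k A = n-1$; combined with Theorem \ref{t.basic}, $A$ is of separable or split type. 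The key reduction is that a codimension-$1$ subspace $A \le B$ containing $1 = \sum_{v\in Q_0} v$ is a subalgebra if and only if its defining linear functional $\varphi \in B^*$ (well-defined up to scalar, so a point of $\PP(B^*)$, which we cut down to $\PP(V^*)$ using $\varphi(1)=0$, i.e. the coefficient of $v_0^*$ is determined by the others) annihilates all products $xy$ with $x,y$ basis elements that themselves lie in $\ker\varphi$. Writing $\varphi = \sum_{i\ge 1}\lambda_{v_i} v_i^* + \sum_j \lambda_{\alpha_j}\alpha_j^*$ and carrying out the multiplication table of $T_2Q$ (vertices are orthogonal idempotents summing to $1$, $v\alpha = \alpha$ if $s(\alpha)=v$ else $0$ wait — careful: $\alpha = t(\alpha)\,\alpha\, s(\alpha)$, and products of two arrows vanish unless... in $T_2Q$ they vanish entirely), I would extract the finitely many quadratic and cubic constraints on the $\lambda$'s. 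The claim is exactly that these constraints generate $X_0 + X_{1/2} + X_1$.

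The main work is organized by "degree of the constrained product." For $X_1$: the product $\alpha_i\alpha_j$ in $T_2Q$ is zero (it lies in $J(Q)^2$), so it lies in every subalgebra automatically — hence there is \emph{no} constraint from pure arrow-arrow products; the relations in $X_1$ must instead come from somewhere else. Here I expect the subtlety: $X_1$ records the condition that $A$ restricted to a parallel class of arrows $v_jQ_1v_k$ either contains the whole class or meets it in a hyperplane, equivalently the functional $\varphi$ vanishes on a two-element parallel class only in a "rank $\le 1$" pattern — and more precisely, if $\alpha_i,\alpha_j$ are \emph{not} parallel then $\alpha_i\alpha_j = 0$ forces, via the interplay with vertex idempotents, that $\varphi$ cannot be nonzero on both while satisfying the vertex equations. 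I would verify this by taking $x = \alpha_i + (\text{correction by vertices and parallel arrows})$ and $y = \alpha_j + (\text{correction})$ both in $\ker\varphi$ and computing $xy$. For $X_{1/2}$ (the arrow-vertex interactions): using $v\alpha = \delta_{v,s(\alpha)}\alpha$ and $\alpha v = \delta_{v,t(\alpha)}\alpha$ in $T_2Q$, the condition that $\ker\varphi$ is closed under multiplying an arrow-ish element by a vertex-ish element yields exactly the three generating sets listed — the first saying $\varphi$ can't see arrows incident to forbidden vertices, the second relating $\varphi$'s value on $\alpha_i \in v_jQ_1v_k$ to its values on $v_j,v_k$ with the alternating sign $(-1)^{k-j-1}$ coming from the chosen ordering/normalization of frames, and the third handling loops. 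For $X_0$ (pure vertex part): this is the statement that $\AlgGr_{n_0-1}(kQ_0) = \AlgGr_{n_0-1}(k^{n_0+1})$ — the separable-type combinatorics — and the listed cubic generators $v_iv_jv_k$ and signed relations $v_i^2v_j \pm v_iv_j^2$ are precisely the defining ideal of the variety of maximal subalgebras of $k^{n_0+1}$ (each such subalgebra is $A(u+v)$, i.e. determined by an unordered pair of idempotents), which I would either cite from \cite{IS} or verify by hand in the semisimple commutative case.

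The strategy for proving the ideal equality (rather than just set-theoretic equality of vanishing loci) is: (a) show each listed generator vanishes on $\msa(Q)$, hence $X \subseteq I(\msa(Q))$ — routine from the multiplication table; (b) show the reverse by a dimension/irreducible-component count, deferring to Corollary \ref{c.irred}: decompose $V(X) = V(X_0+X_{1/2}+X_1)$ into its components, check each is a linear subspace of $\PP(V^*)$ of the predicted dimension, and match them bijectively with the actual subalgebra types $A(u+v)$ and $A(u,v,U)$ from Theorem \ref{t.basic}, which already sweep out all of $\msa(Q)$ as a set. Since $X$ is then radical with $V(X) = \msa(Q)$ as sets and the component analysis forces $X = I(V(X))$ (or one checks primality of each $X/(\text{component})$ directly), we conclude. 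The hard part will be (b), specifically the bookkeeping showing the listed generators cut out \emph{exactly} the union of these linear pieces with no embedded or extra components — the sign conventions $(-1)^{k-j-1}$ must be threaded consistently through the reduced column-echelon frame normalization, and the case analysis over whether a parallel class is a loop, a single arrow, or multiple arrows has to be handled uniformly. I would manage this by first treating $n_1 = 0$ (pure $X_0$), then a single parallel class, then assembling by a direct-sum/independence argument that the constraints in different "blocks" (the vertex block, each arrow-vertex incidence, each arrow-arrow non-parallel pair) interact only through the vertex variables in the controlled way encoded by $X_{1/2}$.
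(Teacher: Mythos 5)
Your construction of the embedding (hyperplanes through $1$ as points of $\PP(V^*)$, with the $v_0$-coordinate eliminated via $\varphi(1)=0$) and your step (a) match the paper: it likewise invokes Theorem \ref{t.basic} to reduce to the orbit representatives $A(u+v)$ and $A(u,v,U)$, writes an explicit ordered frame for each, and verifies the generators of $X_0$, $X_{1/2}$, $X_1$ vanish as maximal minors; the signs $(-1)^{k-j-1}$ are exactly the frame-ordering bookkeeping you anticipate. One caveat on $X_1$: its generators do not need to be excavated from products of ``corrected'' elements of $\ker\varphi$ as you speculate. They record the fact, immediate from Theorem \ref{t.basic}, that a split-type subalgebra's functional is supported on a single parallel class $ukQ_1v$ of arrows while a separable-type one kills all arrows; your sketched derivation of $X_1$ is both unnecessary once the classification is in hand and, as written, not a proof.

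The genuine gap is in your step (b). Knowing that $V(X)$ decomposes set-theoretically into linear components matching the $A(u+v)$ and $A(u,v,U)$ families, and even that each component is $V(P_i)$ for a homogeneous prime $P_i$, yields only $\sqrt{X}=\bigcap_i P_i = I(\msa(Q))$. The theorem asserts $X=I(\msa(Q))$, so you must show $X$ is radical (equivalently $\bigcap_i P_i\subseteq X$), and ``the component analysis forces $X=I(V(X))$'' is not an argument: primality of each $X+(\text{linear forms})$ says nothing about embedded primes or non-reducedness of $X$ itself. This is precisely the hard content, and the paper supplies a mechanism your outline lacks: take an arbitrary homogeneous $f=\sum_{\sigma}\phi_{\sigma}\mu_{\sigma}$ vanishing on $\msa(Q)$, evaluate it on the parametrized families $A(v_j+v_k)$ and $A(\lambda)$ (split-type subalgebras whose frames carry free parameters $\lambda$ ranging over a torus), and use the resulting identities, valid for all $\lambda$, to eliminate each coefficient $\phi_{\sigma}$ modulo $X_0+X_{1/2}+X_1$, one monomial support at a time. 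Note also that you cannot defer to Corollary \ref{c.irred} for the component decomposition, since that corollary is deduced from this theorem; you would have to establish the decomposition of $V(X)$ independently, which is further unaddressed work.
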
 

\begin{proof} 
Let $v_0^* : B \rightarrow k$ denote the algebra character afforded by the simple $B$-module at $v_0$, and let $L : B \rightarrow \ker v_0^* = V$ be the linear transformation $L(x) = x - v_0^*(x)\cdot 1$. Then $L$ is surjective with $\ker L = k\cdot 1$, and so it induces an isomorphism $\overline{L}: B/k\cdot 1 \rightarrow V$. Dualizing, we get an isomorphism $(\overline{L})^* : V^* \rightarrow (B/k\cdot 1)^*$, which induces an isomorphism $\PP(V^*) \cong \PP((B/k\cdot 1)^*) \cong \Gr_{n_0+n_1-1}(B/k\cdot 1)$. But since every subalgebra is unital, the map $A \mapsto A/k\cdot 1$ induces a closed immersion $\msa (Q) \rightarrow \Gr_{n_0+n_1-1}(B/k\cdot 1)$, and hence a closed immersion $\msa (Q) \rightarrow \PP(V^*)$. Note that $V\cdot V \subset V$ as a subset of $B$, and that the image of $\msa (Q)$ in $\PP(V^*) \cong \Gr_{n_0+n_1-1}(V)$ is simply the set of all multiplicatively-closed $(n_0+n_1-1)$-dimensional subspaces of $V$. By a slight abuse of notation, we will sometimes identify $A \in \msa (Q)$ with its image $L(A) \subset V$.

Under this identification, we first show that every element of $X$ vanishes on $\msa (Q)$. To show that $X_0$ vanishes on $\msa (Q)$, we first consider $v_i(A(v_j+v_k))$, where $1 \le i \le n_0$ and $0 \le j < k \le n_1$. If $j=0$, then $L(A(v_0+v_k))$ has $Q_0\setminus\{v_0,v_k\} \cup Q_1$ as an ordered basis; by performing column operations on any frame for $L(A(v_0+v_k))$, we can transform it into a frame with precisely these columns. But then the $v_k$-row in the frame is zero, so that if $i \neq k$, $v_i(A(v_0+v_k)) = 0$. This immediately implies that $v_iv_{\ell}v_{p}(A(v_0+v_k)) = 0$ for any triple $(i,\ell,p)$ of pairwise distinct positive integers. Similarly, for all $1 \le i < j \le n_0$, either $i \neq k$ or $j \neq k$, and so $[v_i^2v_j - (-1)^{j-i-1}v_iv_j^2](A(v_0+v_k)) = 0$. Otherwise $j>0$, and $A(v_j+v_k)$ has a basis of the form $Q_0\setminus \{v_0,v_j,v_k\} \cup \{ v_j+v_k\} \cup Q_1$. We can totally order this basis by defining $v_i< v_j+v_k$ for $i<j$ and $v_j+v_k < v_i$ for $i>j$. Then $v_i(A(v_j+v_k)) = 0$ for all $i \not\in \{ j,l\}$, since the $v_j$-row equals the $v_k$-row in this frame. Again, we find that $[v_iv_{\ell}v_p](A(v_j+v_k)) = 0$ for any triple $(i,\ell,p)$ of pairwise distinct positive integers, and that $[v_i^2v_p - (-1)^{p-i-1}v_iv_p^2](A(v_j+v_k)) = 0$ if $\{ i,p\} \neq \{ j, k \}$. A direct computation shows that if $j<k$, then $v_k(A(v_j+v_k)) = 1$ and $v_j(A(v_j+v_k)) = (-1)^{k-j-1}$ with the specified frame. Hence, $[v_j^2v_k - (-1)^{k-j-1}v_jv_k^2](A(v_j+v_k))=0$. In other words, the generators of $X_0$ vanish on maximal subalgebras of separable type.  

Now consider $v_i(A)$, for $1 \le i \le n_0$ and $A$ a maximal subalgebra of split type. $A$ is $\Inn^*(B)$-conjugate to an algebra of the form $A(v_j,v_k,W)$, for $v_j,v_k \in Q_0$. If $i \not\in \{ j,k\}$, then $v_i \in L(A)$, and so there is a frame for $A$ with $v_i$ as a column. The minor of this frame formed by deleting the $v_i$-row contains a zero column, and so $v_i(A) = 0$. Immediately, we see that $[v_iv_{\ell}v_p](A) = 0$ for any three distinct vertices, and $[v_i^2v_p - (-1)^{p-i-1}v_iv_p^2](A) = 0$ if $\{ i,p\} \neq \{ j,k\}$. So it only remains to check the case $j \neq k$ and $\{ i ,p\} = \{j,k \}$. Note that $A$ must belong to a Schubert cell corresponding to an arrow $v_j\xrightarrow[]{\alpha_p}v_k$. This means that $L(A)$ contains a basis of the form $\gamma = Q_0\setminus \{ v_0, v_j,v_k\} \cup \{ v_j+\lambda\alpha_p, v_k - \lambda\alpha_p\} \cup Q_1\setminus v_jQ_1v_k \cup \beta$, where $\beta$ is a basis for $W$. Note that if $v_j = v_k$, we necessarily have $\lambda = 0$. In fact, if $\{ \alpha_i, \ldots , \alpha_{p-1}, \alpha_p, \ldots , \alpha_q\}$ are the (ordered) arrows from $v_j$ to $v_k$, then $\beta$ can be taken to be of the form $\beta = \{ \alpha_i+\lambda_i\alpha_p, \ldots , \alpha_{p-1}+\lambda_{p-1}\alpha_p, \alpha_{p+1}, \ldots , \alpha_q\}$. Ordering this basis for $L(A)$ in the obvious way, we obtain a frame for $L(A)$ satisfying $v_j(A) = (-1)^{k-j-1}v_k(A)$. Hence, $[v_j^2v_k - (-1)^{k-j-1}v_jv_k^2](A) = 0$. This shows that elements of $X_0$ belong to the vanishing ideal of $\msa (Q)$. 

Observe that $\alpha_i(A(v_j+v_k)) = 0$ because $\alpha_i \in L(A(v_j+v_k))$. So we only need to check the remaining relations on maximal subalgebras of split type. To show that elements of $X_{1/2}$ vanish on $\msa (Q)$, suppose that $A$ is conjugate to $A(v_j,v_k,W)$ as before, and take $\gamma$ to be the ordered basis described above. If $v_p\not\in \{ v_j,v_k\}$ then we have already seen $[\alpha_iv_p](A)= 0$, so we may assume that $v_p \in \{v_j,v_k\}$. Then if $\alpha_iv_p \in X_{1/2}$, $v_p$ must satisfy $v_p \not\in \{ s(\alpha_i),t(\alpha_i)\}$. This implies that $\alpha_i$ is not parallel to $\alpha_p$. Then $\alpha_i \in L(A)$ and $\alpha_i(A) = 0$. We must also show that for all arrows $\alpha \in v_jQ_1v_k$, $[(v_k - (-1)^{k-j-1}v_j)\alpha](A) = 0$. But this is clear, since $v_j(A) = (-1)^{k-j-1}v_k(A)$. Finally, suppose that $\alpha \in Q_1$ is a loop at $v$. If $(v_j,v_k) \neq (v,v)$ then of course $[v\alpha](A) = 0$. Otherwise $(v_j,v_k) = (v,v)$. In this case, $v \in L(A)$, as noted above, so that $v(A) = 0$ and hence $[v\alpha](A) = 0$. We conclude that every element of $X_{1/2}$ vanishes on $\msa (Q)$. A similar argument shows that every element of $X_1$ also vanishes on $\msa (Q)$.

All that remains to be shown is that any homogeneous polynomial vanishing on $\msa (Q)$ belongs to $X$. Let $f \in R$ be a homogeneous polynomial of degree $d$ vanishing on $\msa (Q)$. Then we can write $f = \sum_{\sigma}{\phi_{\sigma}\mu_{\sigma}}$, where: 

\begin{enumerate}
\item the sum runs through all functions $\sigma : Q_0\setminus\{v_0\}\cup Q_1 \rightarrow \ZZ_{\geq 0}$ satisfying $|\sigma| = d$, 
\item each $\phi_{\sigma}$ is an element of $k$, and 
\item \begin{equation*}
\mu_{\sigma} := \prod_{x \in Q_0\setminus\{v_0\}\cup Q_1}{x^{\sigma(x)}}. 
\end{equation*} 
\end{enumerate}

\noindent Then for all $1 \le i \le n_0$, $0 = f(A(v_0+v_i)) = \sum_{\sigma}{\phi_{\sigma}\mu_{\sigma}(A(v_0+v_i))} = \phi_{d\cdot v_i^*}v_i(A(v_0+v_i))^d$. There is a frame for $L(A(v_0+v_i))$ in which $v_i(A(v_0+v_i)) = 1$, and so this implies $\phi_{d\cdot v_i^*} = 0$ for all $1 \le i \le n_0$.   

For all $1 \le j < k \le n_0$,  
\begin{equation*}
0 = f(A(v_j+v_k)) = \sum_{i=1}^{d-1}{\phi_{i\cdot v_j^* + (d-i)\cdot v_k^*}v_j(A(v_j+v_k))^iv_k(A(v_j+v_k))^{d-i}}.
\end{equation*}
\noindent Note that the previous argument allowed us to exclude $i = 0$ and $i = d$. Since we can choose a frame for $L(A(v_j+v_k))$ in which $v_j(A(v_j+v_k)) = (-1)^{k-j-1}$ and $v_k(A(v_j+v_k)) = 1$, we see that this relation reduces to 
\begin{equation*} 
0 = \sum_{i=1}^{d-1}{\phi_{i\cdot v_j^* + (d-i)\cdot v_k^*}(-1)^{(k-j-1)i}},
\end{equation*} 
\noindent which in turn holds if and only if  

\begin{equation*}
\sum_{i=1}^{d-1}{\phi_{i\cdot v_j^*+(d-i)\cdot v_k^*}\mu_{i\cdot v_j^* + (d-i)\cdot v_k^*}} \in X_0. 
\end{equation*}
 \noindent Every other term $\phi_{\sigma}\mu_{\sigma}$ in $f$ satisfying $\sigma (Q_1) = 0$ must involve three distinct vertices, and all such terms already belong to $X_0$. So without loss of generality, we may assume that $f$ is contained in the ideal of $R$ generated by $Q_1$. Of course, we may also subtract off any terms in $f$ that are already in $X_{1/2}$ or $X_1$. So in fact, we may assume that if $\phi_{\sigma} \neq 0$, then there exist $v_j,v_k \in Q_0\setminus\{v_0\}$ such that $\sigma(v_jQ_1v_k) \neq 0$ and $\supp (\sigma) \subset \{ v_j, v_k\} \cup v_jQ_1v_k$. Suppose that $v_jQ_1v_k = \{ \alpha_i, \ldots , \alpha_q\}$. Let $\lambda$ be a function $\{j\} \cup \{ i,\ldots , q -1\} \rightarrow \KK^{\times}$, and let $A(\lambda)$ be the maximal subalgebra with basis $Q_0\setminus\{ v_j,v_k\}\cup \{ v_j + \lambda(j) \alpha_q, v_k-\lambda(j) \alpha_q\} \cup Q_1\setminus v_jQ_1v_k \cup \{ \alpha_i+\lambda(i)\alpha_q, \ldots , \alpha_{q-1} + \lambda(q-1)\alpha_q\}$ (again, if $v_j = v_k$ we assume $\lambda(j) = 0$). Then evaluating at the corresponding frame yields  

\begin{equation*} 
0 = f(A(\lambda)) = \sum{\phi_{q\cdot v_j^* + r\cdot v_k^* + \tau}(\lambda(j)(-1)^{n_0+q-j})^q(\lambda(j)(-1)^{n_0+q-k+1})^r\lambda^{\tau}},
\end{equation*}

\noindent where: 

\begin{enumerate}
\item the sum ranges over all triples $(q,r,\tau )$ such that $\supp (\tau) \subset v_jQ_1v_k$ and $|\tau| = d-q-r$, and 
\item 
\begin{equation*}
\lambda^{\tau}:= \prod_{i \le \ell \le q-1}{\alpha_{\ell}(A(\lambda))^{\tau(\alpha_{\ell})}} = \pm \prod_{i \le \ell \le q-1}{\lambda(\ell)^{\tau (\alpha_{\ell})}}.  
\end{equation*} 
\end{enumerate}

\noindent Since this must hold for all $\lambda$, it follows that for all fixed $\tau$, 

\begin{equation*} 
0 = \sum_{i=0}^{d - |\tau |}{\phi_{(d-|\tau | - i)\cdot v_j^* + i\cdot v_k^* + \tau}(-1)^{(1-j-k)i}},
\end{equation*} 
\noindent where we have gathered terms from the powers of $v_j(A(\lambda))$ and $v_k(A(\lambda) )$, and have cancelled constant powers of $-1$ from both sides of the equation. When $|\tau | < d$, this implies that the sum of all monomials in $f$ containing a vertex and and arrow lie in $X_0 + X_{1/2}$. Hence, we have reduced to the following case: $f = \sum{\phi_{\sigma}\mu_{\sigma}}$, where $\phi_{\sigma} \neq 0$ implies that for some $v_j, v_k \in Q_0$, $\supp (\sigma ) \subset v_jQ_1v_k$. But applying $A(\lambda )$ to such a polynomial yields 

\begin{equation*} 
0 = \sum_{\supp(\sigma )\subset v_jQ_1v_k}{\phi_{\sigma}\mu_{\sigma}(A(\lambda))}.
\end{equation*} 

\noindent Notice that  

\begin{equation*}
\mu_{\sigma}(A(\lambda)) = \pm\prod_{\ell \in \{j\} \cup \{i,\ldots , q\}}{\lambda(\ell)^{\sigma(\alpha_{\ell})}}.  
\end{equation*}

\noindent Since this holds for all such $\lambda$, it follows that for all $\sigma$ with $\supp (\sigma) \subset v_jQ_1v_k$, $\phi_{\sigma} = 0$. Applying this to all pairs of vertices $v_j,v_k \in Q_0$, we conclude that $f \equiv 0$. In other words, any polynomial vanishing on $\msa (Q)$ must lie in $X_0 + X_{1/2} + X_1 = X$, as we wished to show.
\end{proof}     

\noindent For the following Corollary, we recall the definitions of some standard quivers:  

\begin{definition}\label{d.irredQ}
For any $m\in \NN$, the \emph{$m$-loop quiver} is the quiver with vertex set $\{ 1\}$ and arrow set $\{\alpha_1,\ldots , \alpha_m\}$: 
\[ 
\vcenter{\hbox{ 
 \begin{tikzpicture}[point/.style={shape=circle,fill=black,scale=.3pt,outer sep=3pt},>=latex, baseline=-3,scale=2]  
\node[point,label={below:$1$}] (1) at (0,0) {} ;   
\node (2) at (0,.25) {$\cdots$} ;
\path[->] (1) edge[in = 120, out= 180, loop] node[left] {$\alpha_1$} (1) edge[in = 60, out = 0, loop] node[right] {$\alpha_m$}  (1);
\end{tikzpicture}
}}
\]

\noindent We denote this quiver by $\mathbb{L}_m$. The \emph{$m$-Kronecker quiver} is the quiver with vertex set $\{s,t\}$ and arrow set $Q_1 = sQ_1t = \{ \alpha_1,\ldots , \alpha_m\}$: 

\[ 
\vcenter{\hbox{ 
 \begin{tikzpicture}[point/.style={shape=circle,fill=black,scale=.3pt,outer sep=3pt},>=latex, baseline=-3,scale=2]   
 \node[point,label={below:$s$}] (1) at (0,0) {} ; 
 \node[point,label={below:$t$}] (2) at (2,0) {} ;  
 \node (3) at (1,0) {$\vdots$} ;
 \path[->] (1.100) edge[bend left] node[above] {$\alpha_1$} (2.100)  
 (1.-100) edge[bend right] node[below] {$\alpha_m$} (2.-100); 
 
\end{tikzpicture} 
}} 
\] 

\noindent We denote this quiver by $\mathbb{K}_m$. Finally, the \emph{$m$ isolated vertices} quiver is just the quiver with vertex set $\{ 1,\ldots , m\}$ and arrow set $Q_1 = \emptyset$.
\end{definition}

\begin{example}\label{e.irred}
It is easy to check that any subspace of $k\mathbb{K}_m$ is a subalgebra if and only if it contains $1$. Hence, $\msa (\mathbb{K}_m) = \Gr_{m}(k\mathbb{K}_m/k\cdot 1) \cong \mathbb{P}^{m}$. Similarly, any subspace of $J(T_2\mathbb{L}_m)$ is multiplicatively closed, and so $\msa (\mathbb{L}_m) = \Gr_{m-1}(J(T_2\mathbb{L}_m)) \cong \mathbb{P}^{m-1}$. If $Q$ is two isolated vertices, then $k\cdot 1$ is the only proper subalgebra and $\msa (Q)$ is a point. It turns out that for all other $Q$, $\msa (Q)$ is reducible.
\end{example}

\begin{corollary}\label{c.irred}
Let $Q$ be a finite quiver. Identify $\msa (Q)$ with the variety of maximal subalgebras of $T_2Q$. Then the irreducible components of $\msa (Q)$ may be described explicitly as follows: 
\begin{enumerate} 
\item For each two-element subset $\{ s,t\} \subset Q_0$ such that $sQ_1t \cup tQ_1s = \emptyset$, the singleton $\{ A(s+t)\}$ is an irreducible component of dimension $0$;  
\item For each $(s,t) \in V^2(Q)$ with $s \neq t$, the collection  
\begin{equation*}
\left(\bigcup_{U \in \PP ((skQ_1t)^*)}{[\Inn^*(B)\cdot A(s,t,U)]} \right) \cup \{ A(s+t)\} 
\end{equation*}
\noindent is an irreducible component of dimension $|sQ_1t|$; 
\item For each $(s,s) \in V^2(Q)$, the collection  
\begin{equation*}
\bigcup_{U \in \PP ((skQ_1s)^*)}{[\Inn^*(B)\cdot A(s,s,U)]} 
\end{equation*}
\noindent is an irreducible component of dimension $|sQ_1s| -1$.
\end{enumerate} 

\noindent These irreducible components are all projective spaces, and the dimension of $\msa (Q)$ is the maximum of the dimensions described above. Furthermore, $\msa (Q)$ is an irreducible projective variety if and only if $Q$ is an $m$-Kronecker quiver, an $m$-loop quiver, or the two isolated vertices quiver. 
\end{corollary}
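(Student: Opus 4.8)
\textbf{Proof plan for Corollary \ref{c.irred}.}

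The plan is to leverage Theorem \ref{t.basic}, which tells us that every maximal subalgebra of $B = T_2Q$ is $\Inn^*(B)$-conjugate to one of the form $A(s+t)$ or $A(s,t,U)$, together with the closed embedding $\msa(Q)\hookrightarrow \PP(V^*)$ from Theorem \ref{t.msa(Q)}. First I would establish set-theoretically that $\msa(Q)$ is the union of the subsets listed in (1), (2), (3): every separable-type subalgebra $A(s+t)$ appears in (1) if $sQ_1t\cup tQ_1s=\emptyset$, and otherwise (when, say, $(s,t)\in V^2(Q)$) it appears as the distinguished point in the family (2); every split-type subalgebra lies in one of the $\Inn^*(B)$-orbit closures appearing in (2) or (3). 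So the decomposition covers $\msa(Q)$, and what remains is to identify each listed piece with a linear subspace of $\PP(V^*)=\PP^N$ and check maximality among irreducible closed subsets.

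The key computational step is to show that each family in (2) and (3) is exactly a \emph{coordinate linear subspace} of $\PP(V^*)$, cut out by setting certain coordinates to zero. For a fixed $(s,t)\in V^2(Q)$ with $s\neq t$, I would write down explicit frames for $A(s,t,U)$ as $U$ ranges over $\PP((skQ_1t)^*)$ and for $A(s+t)$, read off which of the homogeneous coordinates $v_1,\dots,v_{n_0},\alpha_1,\dots,\alpha_{n_1}$ can be nonzero — only $v_s$ (equivalently $v_t$, by the relation $v_s = (-1)^{t-s-1}v_t$ forced on this locus) and the arrows $\alpha\in sQ_1t$ survive — and conclude that this locus is precisely the linear subspace $\PP(W_{s,t})$ where $W_{s,t}$ is spanned by those coordinate vectors; counting dimensions gives $|sQ_1t|$ (the subspace has a $1$-dimensional vertex part plus $|sQ_1t|$ arrow directions, so projective dimension $|sQ_1t|$). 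The loop case $(s,s)$ is similar but the vertex coordinate $v_s$ is killed (by the relation $v\alpha$ with $s(\alpha)=v=t(\alpha)$ in $X_{1/2}$, together with $v(A)=0$ on such subalgebras), giving a linear subspace of projective dimension $|sQ_1s|-1$; and a singleton $\{A(s+t)\}$ in case (1) is a single coordinate point, i.e. a $0$-dimensional linear subspace. That each of these linear subspaces actually lies inside $\msa(Q)$ follows because every point of it is a frame for an honest maximal subalgebra (of split or separable type), so it is contained in the vanishing set of $X$.

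Once each piece is a linear subspace contained in $\msa(Q)$, irreducibility of the pieces is automatic, and I would finish by verifying that $\msa(Q)$ has no larger irreducible closed subsets — equivalently, that the listed linear subspaces are the maximal linear subspaces contained in $\msa(Q)$, and that $\msa(Q)$ contains no irreducible subvariety meeting two of them in a way not contained in either. This is where I expect the main obstacle: one must check that distinct pieces intersect only in the "expected" way (the families (2) for $(s,t)$ and $(t,s)$ share exactly the point $A(s+t)$; families attached to different vertex pairs are disjoint or meet in lower-dimensional coordinate faces already contained in both), so no piece is contained in another and none can be enlarged. I would handle this by a direct monomial/support analysis: a coordinate subspace of $\PP(V^*)$ lies in $\msa(Q)$ iff its defining monomial support avoids the generators of $X$ listed in Theorem \ref{t.msa(Q)}, and one checks combinatorially that the maximal such supports are exactly $\{v_s\}\cup sQ_1t$ (for $s\neq t$, together with the identification $v_s\sim v_t$, giving the families (2) and the points (1) as degenerate cases) and $sQ_1s$ (for loops). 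Finally, for the last assertion: $\msa(Q)$ is irreducible iff there is only one piece in the list. Examining the cases, a single vertex with $m$ loops gives only piece (3) ($\cong\PP^{m-1}$, matching Example \ref{e.irred}); two vertices with all arrows from $s$ to $t$ (the $m$-Kronecker quiver) gives only piece (2) ($\cong \PP^m$); two isolated vertices give only the single point in (1); and in every other quiver at least two pieces occur (e.g. two distinct vertex-pairs in $V^2(Q)$, or a piece of type (2)/(3) plus a separable-type point from (1), or a loop at one vertex plus anything else), so $\msa(Q)$ is reducible. This yields both the explicit description of the components and the irreducibility criterion.
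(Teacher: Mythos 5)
Your proposal is correct and follows essentially the same route as the paper: identify each listed family as a linear subspace of $\PP(V^*)$ (the paper exhibits it as the vanishing set of a prime ideal $P \supset X$ with $k[\msa(Q)]/P$ a polynomial ring, which is the same thing), observe that these closed irreducible sets cover $\msa(Q)$ with no mutual containments, and settle the irreducibility criterion by a case analysis on $Q$. The only cosmetic difference is that the paper deduces reducibility in the connected non-parallel case from $X_1 \neq 0$ forcing $k[\msa(Q)]$ to have zero divisors, whereas you count components directly; both work.
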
 

\begin{proof} 
It is clear that the singletons in (1) are closed irreducible subsets of $\msa (Q)$. So suppose $(v_j, v_{\ell}) \in V^2(Q)$ is given with $j \neq \ell$. Assume $j<\ell$. We claim that  

\begin{equation*}
\bigcup_{U \in \PP ((v_jkQ_1v_{\ell})^*)}{\Inn^*(B)A(v_j,v_{\ell},U)} \cup \{ A(v_j+v_{\ell})\} 
\end{equation*}

\noindent is the vanishing set of a homogeneous prime ideal $P$. Indeed, it is easy to check using the proof of theorem \ref{t.msa(Q)} that $P = X+(Q_0\setminus\{v_0, v_{\ell}\}, Q_1\setminus v_jQ_1v_{\ell})$ if $j= 0$, and $P = X+(v_j - (-1)^{\ell-j-1}v_{\ell}, Q_0\setminus\{ v_0,v_j,v_{\ell}\}, Q_1\setminus v_jQ_1v_{\ell})$ otherwise. In either case, we find that $k[\msa (Q)]/P$ is isomorphic to the polynomial ring $k[v_{\ell}, v_jQ_1v_{\ell}]$. The case $\ell < j$ is similar. Finally, suppose that $v_j \in Q_0$ satisfies $ v_jQ_1v_j \neq \emptyset$. Then $\bigcup_{U \in \PP ((v_jkQ_1v_j)^*)}{\Inn^*(B)A(v_j,v_j,U)}$ is the vanishing set of $P = X+(Q_0\setminus \{ v_0\} , Q_1\setminus v_jQ_1v_j)$ and $k[\msa (Q)]/P \cong k[v_jQ_1v_j]$. 
 
 Every element of $\msa (Q)$ is contained in the union of the closed irreducible subsets described above. Since there are no containment relations between these sets, it follows that these are the irreducible components of $\msa (Q)$. From our description, everything is clear but the criterion for when $\msa (Q)$ is itself irreducible. In example \ref{e.irred}, we verified that $\msa (Q)$ is irreducible when $Q$ is an $m$-Kronecker quiver, an $m$-loop quiver, or the union of two isolated vertices. So we only need to show that for every other $Q$, $\msa (Q)$ is reducible. If $Q$ is not connected, and is not the union of two isolated vertices, then (1)-(3) yield more than one irreducible component, so we may assume that $Q$ is connected. If there exist non-parallel arrows in $Q$, then $X_1$ is not the zero ideal, and $k[\msa (Q)]$ is not a domain. So all arrows in $Q$ must be parallel, which implies that either $Q$ is a Kronecker quiver or an $n$-loop quiver.
\end{proof}    

\begin{example} 
Let  

\[  
Q=
\vcenter{\hbox{ 
 \begin{tikzpicture}[point/.style={shape=circle,fill=black,scale=.3pt,outer sep=3pt},>=latex, baseline=-3,scale=2]   
 \node[point,label={below:$s$}] (1) at (0,0) {} ; 
 \node[point,label={below:$t$}] (2) at (2,0) {} ;  
 \path[->] (1.100) edge node[above] {$\alpha_1$} (2.100)  
 (1.-50) edge node[below] {$\alpha_2$} (2.-125)
 (2.-100) edge[bend left] node[below] {$\beta$} (1.-100) 
 (2) edge[in=45, out=-45,loop] node[right] {$\gamma$} () ;
\end{tikzpicture} 
}} 
\]  
\noindent and take $R = k[t,\alpha_1,\alpha_2,\beta,\gamma]$. Then $\msa (Q)$ is the vanishing set of the ideal $X = (\gamma t, \alpha_1\beta, \alpha_2\beta, \alpha_1\gamma, \alpha_2\gamma, \gamma \beta)$. $V^2(Q)$ consists of the three points $ (s,t)$, $(t,s)$, and $(t,t)$. Each of these corresponds to an irreducible component of dimension $2$, $1$, and $0$, respectively.
\end{example}

\section{Orbits and Isoclasses in $\msa (Q)$}\label{s.iso}     

Let $B = kQ/I$ for some admissible ideal $I$. Taking the Wedderburn-Malcev decomposition $B = kQ_0 \oplus J(Q)/I$ for $B$ discussed in section \ref{s.bg}, we define the following subgroups of $\Aut_k(B)$: 

\begin{equation*} 
\hat{H}_B = \{ \phi \in \Aut_k(B)\mid \phi (Q_0) = Q_0\}, 
\end{equation*} 

\begin{equation*} 
H_B = \{ \phi \in \Aut_k(B) \mid \phi\mid_{Q_0} = \id_{Q_0} \}, 
\end{equation*} 

\begin{equation*} 
\operatorname{Vl}_{(Q,I)} = \{ \phi \in H_B \mid \phi (ukQ_1v) = ukQ_1v \text{ for all $(u,v) \in V^2(Q)$} \}.
\end{equation*}

\noindent These definitions are chosen to be consistent with the terminology of \cite{GS1}. The group $\operatorname{Vl}_{(Q,I)}$ is called the group of \emph{linear changes of variables}. In definition \ref{msaDef}, we note that the action of $\Aut_k(B)$ on $\msa (Q)$ factors through the map $\Aut_k(B) \rightarrow \Aut_k(B/J(B)^2) = \Aut_k(T_2Q)$. Hence, $\Aut_k(B)$ acts on $\msa (Q)$ through a subgroup of $\Aut_k(T_2Q)$. This allows us to think of $\Aut_k(T_2Q)$ as a collection of ``extra symmetries'' of $\msa (Q)$, beyond those afforded by $\Aut_k(B)$. Thankfully, the structure theory of $\Aut_k(T_2Q)$ is fairly straight-foward, so we recall it below.

 Write $G = H_{T_2Q}$, and consider $S_Q$ as a subgroup of $\Aut_k(T_2Q)$. Then there exist short exact sequences  

\begin{equation}\label{e.ses1}
1 \rightarrow \Inn^*(T_2Q) \rightarrow \Aut_k(T_2Q) \rightarrow \Aut_k(T_2Q)/\Inn^*(T_2Q) \rightarrow 1,
\end{equation}  

\begin{equation}\label{e.ses2}
1 \rightarrow G \rightarrow \Aut_k(T_2Q)/\Inn^*(T_2Q) \rightarrow S_Q \rightarrow 1,
\end{equation}  

\noindent both of which split on the right. First we show that, as an abstract group, $\Aut_k(T_2Q)$ is generated by $\Inn^*(T_2Q)$, $G$, and $S_Q$. Let $\phi : T_2Q \rightarrow T_2Q$ be an automorphism. Then there exists an inner automorphism $ \iota_u \in \Inn(T_2Q)$ such that $\psi:= \iota_u\phi$ maps $Q_0$ to itself (see, for instance \cite{IS}). Furthermore, if we write $u = \sum_{v \in Q_0}{\lambda_vv} + \sum_{\alpha \in Q_1}{\lambda_{\alpha}\alpha}$, then $u = (\sum_{v\in Q_0}{\lambda_vv})(1 + \sum_{\alpha \in Q_1}{\frac{\lambda_{\alpha}}{\lambda_{s(\alpha)}}\alpha})$, and $\iota_{\sum_{v \in Q_0}{\lambda_vv}}$ fixes each vertex. Hence, we may assume that $u= 1+x$, where $x \in J(T_2Q) = kQ_1$. Then for all $s, t \in Q_0$, $\psi$ maps the standard basis of $skQ_1t$ to a basis of $\psi(s)kQ_1\psi(t)$. Therefore, there exists a $g \in G$ such that $\tau:= g\psi$ maps the arrows from $s\rightarrow t$ to the arrows $\psi (s) \rightarrow \psi (t)$. In other words, $\tau \in \Aut(Q)$. However, $\tau$ can be written as $\tau = \tau_o\circ \tau_g$, where $\tau_o \in S_Q$ and $\tau_g \in G$. This shows that $\Aut_k(T_2Q)$ is generated by $\Inn^*(T_2Q)$, $G$, and $S_Q$. 

To verify sequence (1), suppose that $\iota_{1+x} \in \Inn^*(T_2Q) \cap G\cdot S_Q$. Since $\iota_{1+x}(v) \equiv v$ $(\operatorname{mod}$ $kQ_1)$ for all $v \in Q_0$, we must have $\iota_{1+x} \in G$. But then $\iota_{1+x}(v) = v$ for all $v \in Q_0$, and $\iota_{1+x}(\alpha) = \alpha$ for all $\alpha \in Q_1$. In other words, $\iota_{1+x} = \id$, so $\Aut_k(T_2Q)/\Inn^*(T_2Q) \cong G\cdot S_Q$. This implies that sequence (1) splits on the right. For sequence (2), simply note that $G$ is normal in $G\cdot S_Q$, and that $G\cap S_Q = \{ \id \}$.  

The subgroups $G$ and $\Inn^*(T_2Q)$ are easy to explicitly describe. We start with $G$: since an element of $G$ fixes each vertex, it sends $ukQ_1v$ to itself for all $u,v \in Q_0$. The restriction to $ukQ_1v$ is just an invertible linear map. Hence, there is an inclusion 

\begin{equation*}
 G \hookrightarrow \prod_{u,v \in Q_0}{\GL(ukQ_1v)}, 
 \end{equation*}
 \noindent which is surjective since $T_2Q$ has radical square zero. For $\Inn^*(T_2Q)$, it is straightforward to check that the map $x \mapsto \iota_{1+x}$ is an isomorphism of groups $(kQ_1)_a \cong \Inn^*(T_2Q)$, where $(kQ_1)_a$ denotes the additive group of $kQ_1$. Now we can reduce the problem of describing $\Aut_k(T_2Q)$-orbits on $\msa (Q)$ to the action of the finite group $S_Q$ on certain finite sets: 
 
 \begin{proposition}\label{t.J2struc}   
  $\Aut_k(T_2Q)$-orbits on $\msa (Q)$ may be described as follows: 
\begin{enumerate} 
\item Orbits containing a maximal subalgebra of split type are in bijection with $S_Q$-orbits on $V^2(Q)$. 
\item Orbits containing a maximal subalgebra of separable type are in bijection with $S_Q$-orbits on $\{ \{s,t\} \subset Q_0 \mid s \neq t \}$. 
\end{enumerate} 
 
 \end{proposition}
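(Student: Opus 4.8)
The plan is to leverage the structure theory developed immediately above — namely that $\Aut_k(T_2Q)$ is generated by $\Inn^*(T_2Q)$, $G$, and $S_Q$, together with Theorem~\ref{t.basic}, which says every maximal subalgebra of $T_2Q$ is $\Inn^*$-conjugate to one of the normal forms $A(s+t)$ or $A(s,t,U)$. Since $\Inn^*(T_2Q)$ acts on $\msa(Q)$ and moving to a normal form costs only an $\Inn^*$-automorphism, every $\Aut_k(T_2Q)$-orbit contains some $A(s+t)$ or some $A(s,t,U)$; moreover no orbit can contain both types, since the two types have different Jacobson radicals (or: $A(s+t)$ contains $J(T_2Q)$ while $A(s,t,U)$ does not), and the "type" is preserved by all of $\Aut_k(T_2Q)$ because automorphisms preserve the radical and the quotient $B/J(B)$ has a unique semisimple lift up to $\Inn^*$ by Wedderburn–Malcev. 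So the two cases in the statement genuinely partition the orbit set, and it remains to identify the orbits within each type.

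For the separable case, I would show directly that for $s\neq t$ and $s'\neq t'$ in $Q_0$, the subalgebras $A(s+t)$ and $A(s'+t')$ are $\Aut_k(T_2Q)$-conjugate if and only if $\{s',t'\} = \sigma(\{s,t\})$ for some $\sigma\in S_Q$. The "if" direction is immediate since $S_Q$ acts on $T_2Q$ by algebra automorphisms permuting the vertices accordingly, carrying $A(s+t)$ to $A(\sigma(s)+\sigma(t))$. For "only if": suppose $\phi\in\Aut_k(T_2Q)$ with $\phi(A(s+t)) = A(s'+t')$. Write $\phi = \iota_{1+x}\circ g\circ \sigma$ with $g\in G$, $\sigma\in S_Q$ (using the generation result, and absorbing any inner part); since $g$ and $\iota_{1+x}$ fix all vertices, the induced action on $B/J(B)\cong k^{|Q_0|}$ is just the permutation $\sigma$, and the idempotent-decomposition data of $A(s+t)$ modulo $J(B)$ — namely, which pair of coordinate idempotents gets merged — is carried by $\phi$ to that of $\sigma(A(s+t))$. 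Comparing with $A(s'+t')$ forces $\{s',t'\}=\sigma(\{s,t\})$. This gives the bijection in (2).

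For the split case, the analogous claim is that $A(s,t,U)$ and $A(s',t',U')$ are $\Aut_k(T_2Q)$-conjugate if and only if $(s',t') = \sigma(s,t)$ for some $\sigma\in S_Q$ — note $U$ and $U'$ are allowed to be arbitrary here, since the proposition only asks for a bijection with $S_Q$-orbits on $V^2(Q)$, not on pairs-with-subspace. The "if" direction again uses that $\sigma\in S_Q$ carries $skQ_1t$ isomorphically onto $\sigma(s)kQ_1\sigma(t)$ (by the definition of the $S_Q$-action on arrows in Section~\ref{s.bg}), hence $A(s,t,U)$ to $A(\sigma(s),\sigma(t),\sigma(U))$, and then an element of $G$ acting as $\GL(\sigma(s)kQ_1\sigma(t))$ can adjust $\sigma(U)$ to any desired codimension-one subspace $U'$. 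For "only if": given $\phi(A(s,t,U)) = A(s',t',U')$, write $\phi = \iota_{1+x}g\sigma$ as before; $\iota_{1+x}$ and $g$ fix vertices, so the pair $(s,t)$ — recoverable from $A(s,t,U)$ as the unique pair $(u,v)\in V^2(Q)$ with $ukQ_1v\not\subseteq A(s,t,U)$ — is sent by $\phi$ to $(\sigma(s),\sigma(t))$, which must equal $(s',t')$. Combining the two cases yields the proposition. The main obstacle I anticipate is the bookkeeping in the "only if" directions: carefully extracting the $S_Q$-invariant combinatorial data (the merged idempotent pair, resp. the distinguished arrow-block $(s,t)$) from the normal-form subalgebra in a way that is manifestly preserved by inner and vertex-fixing automorphisms, so that the permutation $\sigma$ appearing in the factorization $\phi=\iota_{1+x}g\sigma$ is forced to match that data — this requires a clean statement that $G$ and $\Inn^*(T_2Q)$ stabilize each $A(s+t)$ up to adjusting $U$-type data but cannot change the underlying vertex pair.
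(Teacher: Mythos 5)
Your proposal is correct and follows essentially the same route as the paper: reduce to the normal forms $A(s+t)$ and $A(s,t,U)$ via Theorem~\ref{t.basic}, use the transitivity of $G \cong \prod_{u,v}\GL(ukQ_1v)$ on $\PP((skQ_1t)^*)$ to collapse the $U$-data, and let the residual $S_Q$-action account for the vertex data. The only difference is one of emphasis: you spell out the injectivity (``only if'') directions and the disjointness of the two types (via preservation of $J(B)$), which the paper dispatches with ``Claims (1) and (2) are now clear.''
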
 
 
 \begin{proof} 
 $\Inn^*(T_2Q)$-orbits on $\msa (Q)$ are already described by theorem \ref{t.basic}, so it remains to see which $\Inn^*(T_2Q)$-orbits lie in the same $G$-orbit, and which of those lie in the same $S_Q$-orbit. Any element $g \in G$ fixes the maximal subalgebras of separable type, and sends $A(s,t,V)$ to $A(s,t, gV)$. Since the action of $\GL (skQ_1t)$ on $\PP ((skQ_1t)^*)$ is transitive, it follows that $A(s,t,U)$ and $A(s,t,W)$ lie in the same $\Aut_k(T_2Q)$-orbit, for any two $U,W \in \PP((skQ_1t)^*).$ So the $\Aut_k(T_2Q)$ orbits are determined by the action of $S_Q$ on vertices. For $\sigma \in S_Q$, $\sigma(A(s,t,U)) = A(\sigma (s), \sigma (t), \sigma (U))$ and $\sigma (A(u+v)) = A(\sigma (u) + \sigma (v))$. Claims (1) and (2) are now clear.

 \end{proof}

\noindent Let $B = kQ/I$. Then the map $H_B \rightarrow H_{T_2Q}  = \prod_{u,v \in Q_0}{\GL(ukQ_1v)}$ has closed image. Let $H_B(u,v)$ denote the image of the corresponding map $\pi_{uv}: H_B \rightarrow H_{T_2Q} \rightarrow \GL(ukQ_1v)$.

\begin{theorem}\label{t.orb}
Let $B = kQ/I$. Then there are finitely many $\Aut_k(B)$-orbits on $\msa (Q)$ if and only if for each $(u,v) \in V^2(Q)$, there are finitely-many $H_B(u,v)$-orbits on $\Gr_{d(u,v)-1}(ukQ_1v)$.
\end{theorem}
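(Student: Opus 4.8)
The plan is to reduce the action of $\Aut_k(B)$ on $\msa(Q)$ to the action of $H_B$ (via the factorization through $\Aut_k(T_2Q)$), and then use the orbit description of Proposition \ref{t.J2struc} to split $\msa(Q)$ into finitely many pieces indexed by $S_Q$-orbits on $V^2(Q)$ and on two-element subsets of $Q_0$. Since $S_Q$ is finite and the separable-type locus is a finite set of points $\{A(s+t)\}$, those pieces contribute only finitely many orbits regardless; the content is entirely in the split-type locus. So the first step is to observe that $\msa(Q)$ is a finite union of $\Aut_k(B)$-orbits if and only if, for each $(u,v) \in V^2(Q)$, the set $\mathcal{S}_{u,v} := \bigcup_{U \in \PP((ukQ_1v)^*)} \Inn^*(B)\cdot A(u,v,U)$ is a finite union of $\Aut_k(B)$-orbits. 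Here I use that an element of $\Aut_k(B)$ acting on $\msa(Q)$ is, up to $\Inn^*(T_2Q)$ and $S_Q$, an element of $H_{T_2Q}$ coming from $H_B$, and that $\Inn^*(T_2Q)$ already accounts for the ``$\Inn^*$'' in the definition of $\mathcal{S}_{u,v}$.

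The second step identifies $\mathcal{S}_{u,v}$, modulo the $\Inn^*$-conjugation, with $\PP((ukQ_1v)^*) \cong \Gr_{d(u,v)-1}(ukQ_1v)$, and tracks how $\Aut_k(B)$ acts on it. By the argument in Proposition \ref{t.J2struc}, $g \in G = H_{T_2Q}$ sends $A(u,v,U)$ to $A(u,v,gU)$, where $g$ acts on $ukQ_1v$ via the component $\GL(ukQ_1v)$; the $S_Q$-part permutes the $(u,v)$ among themselves and so does not affect finiteness within a fixed $(u,v)$ (it can only merge orbits across different pairs). The key point to nail down is that the image of $\Aut_k(B)$ in this $\GL(ukQ_1v)$-action is exactly $H_B(u,v)$: an arbitrary $\phi \in \Aut_k(B)$ can be composed with a unipotent inner automorphism to fix $Q_0$ (Wedderburn–Malcev, as recalled in \S\ref{s.iso}), landing it in $H_B$, and $\Inn^*(B)$ maps to $\Inn^*(T_2Q)$, which is already quotiented out in the description of $\mathcal{S}_{u,v}$. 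Hence the $\Aut_k(B)$-orbits inside $\mathcal{S}_{u,v}$ are in bijection with the $H_B(u,v)$-orbits on $\Gr_{d(u,v)-1}(ukQ_1v)$, and summing over the finite set $V^2(Q)$ (and the finite separable-type contribution) gives the theorem.

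The main obstacle I anticipate is the bookkeeping in step two: one must be careful that ``finitely many $\Aut_k(B)$-orbits on $\mathcal{S}_{u,v}$'' genuinely corresponds to ``finitely many $H_B(u,v)$-orbits on $\Gr_{d(u,v)-1}(ukQ_1v)$'' and not to orbits of some larger or smaller group. In one direction this is immediate (more symmetry can only merge orbits); in the other, one needs that no element of $\Aut_k(B)$ acts on a given $\mathcal{S}_{u,v}$ in a way not captured by $H_B(u,v)$ together with $\Inn^*(T_2Q)$ — precisely the content of the factorization $\Aut_k(B) \twoheadrightarrow \Aut_k(T_2Q) = \Inn^*(T_2Q) \rtimes (H_{T_2Q}\cdot S_Q)$ combined with the fact that $\Inn^*(B) \twoheadrightarrow \Inn^*(T_2Q)$ (since $T_2Q = B/J(B)^2$ and $J(B)$ surjects onto $J(T_2Q)$). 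A secondary subtlety is the degenerate case $u = v$: there $A(u,u,U)$ forces the ``vertex part'' of the frame to be trivial ($\lambda(j)=0$ in the notation of Corollary \ref{c.irred}), but the $\Gr_{d(u,u)-1}(ukQ_1u)$-description still applies verbatim, so this requires only a remark rather than a separate argument.
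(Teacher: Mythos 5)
Your overall strategy is the same as the paper's: dispose of the finitely many separable-type orbits, and reduce the split-type locus to the Grassmannians $\Gr_{d(u,v)-1}(ukQ_1v)$ via the parametrization $A(u,v,U)$ of $\Inn^*$-orbits. The sufficiency direction goes through as you describe, since $hA(u,v,W) = A(u,v,\pi_{uv}(h)W)$ for $h \in H_B$ and $H_B(u,v)$ is contained in whatever group actually acts.

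The gap is in the necessity direction, at the step where you claim ``the image of $\Aut_k(B)$ in this $\GL(ukQ_1v)$-action is exactly $H_B(u,v)$.'' Composing $\phi \in \Aut_k(B)$ with a unipotent inner automorphism only forces $\phi$ to preserve $Q_0$ \emph{setwise}, i.e.\ it lands you in $\hat{H}_B$, not in $H_B$. An element of $\hat{H}_B$ that fixes $u$ and $v$ but permutes the remaining vertices nontrivially still acts on $ukQ_1v$, and nothing forces its action there to lie in $H_B(u,v)$; so the group acting on $\Gr_{d(u,v)-1}(ukQ_1v)$ is the image of the stabilizer of $(u,v)$ in $\hat{H}_B$, which may strictly contain $H_B(u,v)$. (Your parenthetical that the ``$S_Q$-part \ldots can only merge orbits across different pairs'' is exactly the false step.) Consequently ``finitely many $\Aut_k(B)$-orbits'' only gives you finitely many orbits of this larger group, and to descend to finitely many $H_B(u,v)$-orbits you need the additional input that $H_B$ has finite index in $\hat{H}_B$ — the paper quotes Proposition 13 of \cite{GS1} for precisely this, and also routes the argument through Jacobson radicals (which are $B$-subbimodules, hence $\Inn^*(B)$-invariant) to cleanly separate the inner factor. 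Relatedly, your displayed factorization $\Aut_k(B) \twoheadrightarrow \Aut_k(T_2Q)$ is false in general — only $\Inn^*(B) \rightarrow \Inn^*(T_2Q)$ is surjective — and if it were true the theorem would be vacuous, since Proposition \ref{t.J2struc} already shows $\Aut_k(T_2Q)$ has finitely many orbits on $\msa(Q)$. The correct statement is that $\Aut_k(B) = \Inn^*(B)\hat{H}_B$ maps to a (typically proper) subgroup of $\Aut_k(T_2Q)$, and the whole content of the theorem lives in how small that subgroup can be.
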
 

\begin{proof} 
There are ${|Q_0|}\choose{2}$ orbits corresponding to maximal subalgebras of separable type, so it suffices to prove the equivalence on maximal subalgebras of split type. To prove sufficiency, suppose that $A(u,v,W)$ is a maximal subalgebra of split type, and that for all $(u,v) \in V^2(Q)$, $\Gr_{d(u,v)-1}(ukQ_1v)$ is a finite union of $H_B(u,v)$-orbits. Consider $A(u,v,\bar{h}W)$, where $\bar{h} $ is an element of $H_B(u,v)$. Picking $h \in H_B$ with $\bar{h} = \pi_{uv}(h)$, we see that $A(u,v,\bar{h}W) = A(u,v, \pi_{uv}(h)W) = hA(u,v,W)$. In other words, $A(u,v,W)$ is conjugate to $A(u,v,\bar{h}W)$. This implies that there are finitely-many orbits in $\msa (Q)$ under the action of the group generated by $\Inn^*(B)$ and $H_B$.  

It remains to prove necessity. First, note that  
\begin{equation*}
\Aut_k(B) = \Inn^*(B)\hat{H}_B = \hat{H}_B\Inn^*(B). 
\end{equation*}
 Suppose that $A_1$ and $A_2$ are maximal subalgebras of split type lying in the same $\Aut_k(B)$ orbit. We know that there exist triples $(u_1,v_1,W_1)$ and $(u_2,v_2,W_2)$ such that $A_i$ is $\Inn^*(B)$-conjugate to $A(u_i,v_i,W_i)$, for $i = 1,2$. So we may assume $A(u_2,v_2,W_2) = \iota \hat{h} A(u_1,v_1,W_1)$, for some $\iota \in \Inn^*(B)$ and $\hat{h} \in \hat{H}_B$. Then $\hat{h}A(u_1,v_1,W_1)$ is $\Inn^*(B)$-conjugate to $A(u_2,v_2,W_2)$, and so their Jacobson radicals coincide. If we write $J(u,v,W)$ for the Jacobson radical of $A(u,v,W)$, then this means $\hat{h}J(u_1,v_1,W_1) = J(u_2,v_2,W_2)$. Conversely, $\hat{h}J(u_1,v_1,W_1) = J(u_2,v_2,W_2)$ implies $\hat{h}A(u_1,v_1,W_1) = A(u_2,v_2,W_2)$, since $\hat{h}$ permutes $Q_0 \subset A(u_1,v_1,W_1)\cap A(u_2,v_2,W_2)$. It follows that $A(u_1,v_1,W_1)$ lies in the same $\Aut_k(B)$-orbit of $A(u_2,v_2,W_2)$, if and only if $J(u_1,v_1,W_1)$ and $J(u_2,v_2,W_2)$ lie in the same $\hat{H}_B$-orbit of codimension-$1$ $B$-subbimodules of $J(B)$. Hence, there are finitely-many $\hat{H}_B$-orbits of codimension-$1$ $B$-subbimodules of $J(B)$. But $H_B$ is a finite-index normal subgroup of $\hat{H}_B$ by proposition 13 of \cite{GS1}, and so each $\hat{H}_B$ orbit is a finite union of $H_B$-orbits. Furthermore, the $H_B$-orbits of codimension-$1$ subbimodules can be expressed as the union of the $H_B(u,v)$-orbits of $\Gr_{d(u,v)-1}(ukQ_1v)$ over the pairs $(u,v) \in V^2(Q)$. So for each fixed $(u,v) \in V^2(Q)$, $\Gr_{d(u,v)-1}(ukQ_1v)$ is a finite union of $H_B(u,v)$-orbits.
\end{proof} 

\begin{corollary} 
Let $B = kQ/I$. Then $\msa (Q)$ is a finite union of $\Aut_k(B)$-orbits if one of the following conditions holds: 
\begin{enumerate} 
\item $Q$ is Schur. 
\item $I$ is monomial. 
\end{enumerate}
\end{corollary}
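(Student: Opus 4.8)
The strategy is to verify the hypothesis of Theorem \ref{t.orb} in each case: for every $(u,v) \in V^2(Q)$, the Grassmannian $\Gr_{d(u,v)-1}(ukQ_1v)$ must be a finite union of $H_B(u,v)$-orbits. Since $\Gr_{d(u,v)-1}(ukQ_1v) \cong \PP((ukQ_1v)^*)$, we must show that $H_B(u,v)$ acts on this projective space with finitely many orbits. The cheapest way to guarantee this is to show $H_B(u,v)$ contains the scalars $k^\times$ acting on $ukQ_1v$, together with enough extra symmetry; but in fact the genuinely easy subcase is when $d(u,v) \le 1$, where $\PP((ukQ_1v)^*)$ is either empty or a single point and there is nothing to prove. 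So in both (1) and (2) the work is concentrated on pairs $(u,v)$ with $d(u,v) \ge 2$.

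For case (2), assume $I$ is monomial. The key observation is that when $I$ is generated by paths, the group $\operatorname{Vl}_{(Q,I)}$ of linear changes of variables is as large as possible: I claim $\operatorname{Vl}_{(Q,I)}$ surjects onto $\prod_{(u,v)\in V^2(Q)}\GL(ukQ_1v)$, exactly as $H_{T_2Q}$ does, because rescaling and mixing parallel arrows by an arbitrary invertible linear map carries monomial relations to monomial relations (a path is zero in $B$ iff the corresponding path is in $I$, a condition insensitive to which basis of each $ukQ_1v$ one picks). Granting this, $H_B(u,v) = \GL(ukQ_1v)$ for every $(u,v)\in V^2(Q)$, and $\GL(ukQ_1v)$ acts transitively on $\Gr_{d(u,v)-1}(ukQ_1v)$. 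Hence the hypothesis of Theorem \ref{t.orb} holds trivially, and $\msa(Q)$ is a finite union of $\Aut_k(B)$-orbits.

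For case (1), assume $Q$ is Schur — meaning (per the referenced terminology from \cite{GS1}) that $d(u,v) \le 1$ for all $(u,v) \in V^2(Q)$, i.e. $Q$ has no multiple parallel arrows. Then for every $(u,v)\in V^2(Q)$ we have $d(u,v)=1$, so $ukQ_1v$ is one-dimensional and $\Gr_{d(u,v)-1}(ukQ_1v) = \Gr_0(k) = \{0\}$ is a single point. A fortiori it is a finite union of $H_B(u,v)$-orbits, and Theorem \ref{t.orb} again applies.

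\textbf{Main obstacle.} The substantive point is the claim in case (2) that monomiality of $I$ forces $\operatorname{Vl}_{(Q,I)}$ (and hence $H_B(u,v)$) to be all of $\GL(ukQ_1v)$. One must check that an arbitrary element $g = (g_{uv}) \in \prod_{(u,v)}\GL(ukQ_1v)$, extended to an algebra endomorphism of $kQ$ fixing vertices and acting on each $ukQ_1v$ by $g_{uv}$, preserves the ideal $I$ and therefore descends to an automorphism of $B = kQ/I$; this is where monomiality is used, since $g$ sends a path $p$ to a linear combination of the parallel paths to $p$, and if $p \in I$ then all such parallel paths lie in $I$ by definition of a monomial ideal. (One should also confirm this extended map is an automorphism, which is immediate as its inverse is built from $g^{-1}$.) I expect this verification to be short but it is the crux; everything else is bookkeeping with Theorem \ref{t.orb}.
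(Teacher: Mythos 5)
Your treatment of case (1) is correct and is exactly the paper's argument. The gap is in case (2): the claim that monomiality forces $\operatorname{Vl}_{(Q,I)}$ to surject onto $\prod_{(u,v)}\GL(ukQ_1v)$, so that $H_B(u,v)=\GL(ukQ_1v)$, is false. Membership in $I$ is not ``insensitive to which basis of each $ukQ_1v$ one picks'': a general $g_{uv}$ sends a path $p$ passing through an arrow $\alpha$ to a linear combination of paths parallel to $p$, and that entire combination must lie in $I$, which fails as soon as some but not all of those parallel paths are relations. Concretely, take $Q$ with arrows $\alpha,\beta\colon u\to v$ and $\gamma\colon v\to w$, and $I=(\alpha\gamma)$ (monomial and admissible). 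The vertex-fixing map with $\alpha\mapsto\alpha+\beta$, $\beta\mapsto\beta$, $\gamma\mapsto\gamma$ sends $\alpha\gamma$ to $\alpha\gamma+\beta\gamma\notin I$, so it does not descend to $B$; in this example $H_B(u,v)$ is a Borel subgroup of $\GL_2$, not all of $\GL_2$.

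The conclusion still holds, but via the paper's weaker and correct observation: for monomial $I$, every $g=(g_{uv})$ that is \emph{diagonal} with respect to the arrow bases rescales each path by a nonzero scalar and hence preserves $I$, so $H_B(u,v)$ contains a maximal torus of $\GL(ukQ_1v)$. A maximal torus acts on $\Gr_{d(u,v)-1}(ukQ_1v)\cong\PP((ukQ_1v)^*)$ with finitely many orbits (indexed by the supports of linear forms), which is all that Theorem \ref{t.orb} requires. Replacing your surjectivity claim with this torus argument closes case (2).
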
  

\begin{proof} 
If $Q$ is Schur, then $d(u,v) = 1$ for all $(u,v) \in V^2(Q)$ and each Grassmannian $\Gr_{d(u,v)-1}(ukQ_1v)$ is a point. If $I$ is monomial, then for all $(u,v) \in V^2(Q)$, $H_B(u,v)$ contains the invertible diagonal matrices, which decompose $ukQ_1v$ into finitely-many orbits.
\end{proof}   

\noindent Of course, these conditions are not necessary for $\msa (Q)$ to be a finite union of $\Aut_k(B)$-orbits. We now construct a class of (generally) non-Schur, non-monomial $B$ which decompose $\msa (Q)$ as a finite union of $\Aut_k(B)$-orbits. 

\begin{definition} 
Let $Q$ be a quiver. Then we define $r_2(Q)$ to be the quiver with vertex set $(r_2Q)_0 = V^2(Q)$, for which there is an arrow $(u,v) \xrightarrow[]{\alpha} (u',v')$ if and only if $v=v'$. 
\end{definition} 

\begin{definition} 
Let $\lambda \in kQ$ be any element. We call $\lambda$ an \emph{$r_2$-element} if there exist $(u,v) , (v,w) \in V^2(Q)$ such that $\lambda = \sum_{\alpha, \beta}{\lambda_{\alpha \beta}\alpha\beta}$, where the sum ranges over all $\alpha \in uQ_1v$ and $\beta \in vQ_1w$, and $\lambda_{\alpha\beta} \in k$. If $(u,v) \xrightarrow[]{\alpha} (v,w)$ is the corresponding arrow in $r_2Q$, then we say that $\lambda$ has \emph{type $\alpha$}.
\end{definition}  

\noindent {\bf{Note:}} For any quiver $Q$, $r_2(Q)$ is always a Schur quiver. For any arrow $\alpha \in r_2Q$, $0$ is an $r_2$-element of type $\alpha$.

\begin{definition} 
An admissible ideal $I \subset kQ$ is called an \emph{$r_2$-ideal} if it is generated by a set $\{ \lambda_{\alpha} \mid \alpha \in (r_2Q)_1 \}$ such that for all arrows $\alpha \in r_2Q$, $\lambda_{\alpha}$ is an $r_2$-element of type $\alpha$.
\end{definition}  

\noindent Let $I$ be an $r_2$-ideal of $kQ$. Fix a generating set $\lambda = \{ \lambda_{\alpha} \mid \alpha \in (r_2Q)_1 \}$ of $I$, such that each $\lambda_{\alpha}$ is an $r_2$-element of type $\alpha$. If $\alpha$ is an arrow from $(u,v)$ to $(v,w)$, write $\lambda_{\alpha} = \sum_{\gamma, \delta}{\lambda_{\gamma \delta}\gamma \delta}$, for $\gamma \in uQ_1v$ and $\delta \in vQ_1w$. Then $\lambda$ gives rise to a representation of $r_2Q$, which we term $V_{\lambda}$. To each $(u,v) \in V^2(Q)$, we attach the vector space $V_{\lambda}(u,v):= ukQ_1v$. The map $V_{\lambda}(u,v) \xrightarrow[]{V_{\lambda}(\alpha)} V_{\lambda}(u,v)$ is then defined by 

\begin{equation*} 
\gamma \mapsto \sum_{\delta \in vQ_1w}{\lambda_{\gamma \delta}{\delta}}.
\end{equation*}    

\noindent We let $\underline{d}$ denote the dimension vector of this representation, i.e. $\underline{d} : V^2(Q) \rightarrow \NN$ is the function $\underline{d}(u,v) = d(u,v) = \dim_kukQ_1v$.  \newline

\noindent {\bf{Note:}} To be consistent with the multiplication in the path algebra, these representations should be considered \emph{right} $r_2(Q)$-modules. In the example below, therefore, all vector spaces should be understood as row vectors, with matrices acting on the right.

\begin{example}\label{r2Ex}
Let $Q$ be the quiver 

\[  
Q=
\vcenter{\hbox{ 
 \begin{tikzpicture}[point/.style={shape=circle,fill=black,scale=.3pt,outer sep=3pt},>=latex, baseline=-3,scale=2]   
 \node[point,label={below:$v_1$}] (1) at (0,0) {} ; 
 \node[point,label={above:$v_2$}] (2) at (1,1) {} ;   
 \node[point,label={below:$v_3$}] (3) at (1,0) {} ; 
 \node[point,label ={below:$v_4$}] (4) at (2,0) {} ;
 \path[->] (1.100) edge[bend left] node[above] {$\alpha_1$} (3.100)   
 (1) edge node[above] {$\alpha_2$} (3) 
 (1.-500) edge[bend right] node[below] {$\alpha_3$} (3.-500)
 (2) edge node[left] {$\beta$} (3)
 (3.100) edge[bend left] node[above] {$\gamma_1$} (4.100) 
 (3.-500) edge[bend right] node[below] {$\gamma_2$} (4.-500) ;
\end{tikzpicture}.
}} 
\]   

\noindent Then $r_2(Q)$ is the quiver 

\[  
r_2(Q)=
\vcenter{\hbox{ 
 \begin{tikzpicture}[point/.style={shape=circle,fill=black,scale=.3pt,outer sep=3pt},>=latex, baseline=-3,scale=2]   
 \node[point,label={below:$(v_2,v_3)$}] (1) at (0,0) {} ; 
 \node[point,label={below:$(v_3,v_4)$}] (2) at (1,0) {} ;   
 \node[point,label={below:$(v_1,v_3)$}] (3) at (2,0) {} ; 

 \path[->] (1) edge (2) 
 (3) edge (2) ;
\end{tikzpicture}.
}} 
\]    
Consider the ideals $I_1 = (\alpha_1\gamma_1-\alpha_1\gamma_2-\alpha_2\gamma_1+\alpha_2\gamma_2 + \pi\alpha_3\gamma_2)$, $I_2 = (\alpha_1\gamma_2 + \alpha_2\gamma_1, \beta\gamma_1)$, $I_3 = (\alpha_1\gamma_2+\alpha_2\gamma_1, \alpha_1\gamma_1+\alpha_2\gamma_2)$. Then $I_1$ and $I_2$ are $r_2$-ideals of $kQ$, but $I_3$ is not. The representation corresponding to the $r_2$-generating set $\lambda = \{ \alpha_1\gamma_1-\alpha_1\gamma_2-\alpha_2\gamma_1+\alpha_2\gamma_2 + \pi\alpha_3\gamma_2 \}$ is 

\[  
V_{\lambda}=
\vcenter{\hbox{ 
 \begin{tikzpicture}[point/.style={shape=circle,fill=black,scale=.3pt,outer sep=3pt},>=latex, baseline=-3,scale=2]   
 \node[point,label={below:$k$}] (1) at (-1,0) {} ; 
 \node[point,label={below:$k^2$}] (2) at (1,0) {} ;   
 \node[point,label={below:$k^3$}] (3) at (3,0) {} ; 

 \path[->] (1) edge node[above] {$\left(\begin{array}{cc} 0 & 0 \end{array} \right)$
} (2) 
 (3) edge node[above] {$\left( \begin{array}{cc} 1 & -1 \\ -1 & 1 \\ 0 & \pi \end{array} \right)$} (2) ;
\end{tikzpicture},
}} 
\]    

\noindent and the representation corresponding to the $r_2$-generating set $\mu = \{\alpha_1\gamma_2 + \alpha_2\gamma_1, \beta\gamma_1\}$ is 

\[  
V_{\mu}=
\vcenter{\hbox{ 
 \begin{tikzpicture}[point/.style={shape=circle,fill=black,scale=.3pt,outer sep=3pt},>=latex, baseline=-3,scale=2]   
 \node[point,label={below:$k$}] (1) at (-1,0) {} ; 
 \node[point,label={below:$k^2$}] (2) at (1,0) {} ;   
 \node[point,label={below:$k^3$}] (3) at (3,0) {} ; 

 \path[->] (1) edge node[above] {$\left(\begin{array}{cc} 1 & 0 \end{array} \right)$
} (2) 
 (3) edge node[above] {$\left( \begin{array}{cc} 0 & 1 \\ 1 & 0 \\ 0 & 0 \end{array} \right)$} (2) ;
\end{tikzpicture}.
}} 
\]  
\end{example}

\noindent Let $B = kQ/I$. Then a linear change of variables $g \in \operatorname{Vl}_{(Q,I)} \le H_B$ can be identified with a an element $g = (g_{uv}) \in \prod_{u,v \in Q_0}{\GL(ukQ_1v)}$ satisfying $g(I) = I$. Note that $g\lambda_{\alpha}$ is an $r_2$-element of type $\alpha$, and that since $I$ is an $r_2$-ideal, the condition $g(I) = I$ means $g\lambda_{\alpha} \in k\lambda_{\alpha} \setminus \{ 0 \}$. From this, it is easy to see that this is equivalent to $g_{uv}V_{\lambda}(\alpha)g_{vw}^T = \chi(\alpha) V_{\lambda}(\alpha)$, for some $\chi(\alpha) \in k^{\times}$ (where $T$ denotes the usual transpose of a matrix). We are now ready to build our examples. For the next proposition, we introduce the following notation: if $W$ is a vector space, then we let $\operatorname{O}(W)$ denote the orthogonal group of $W$, i.e. the group of all matrices $g$ satisfying $g^Tg = I$. 

\begin{proposition}\label{r2FOP}
Let $Q$, $I$, $B$, and $\lambda$ be as above. Suppose that the no-double-edge graph of $Q$ is a disjoint union of oriented trees. Suppose further that there exists a $\phi = (\phi_{uv}) \in \prod_{u,v \in Q_0}{\operatorname{O}(ukQ_1v)} \le \prod_{u,v \in Q_0}{\GL (ukQ_1v)}$, such that the (isomorphic) representation $W:= \phi V_\lambda$ satisfies the following: for each $\alpha \in (r_2Q)_1$, there is at most one non-zero entry in each row and column of $W(\alpha )$. Then $\msa (Q)$ is a finite union of $\Aut_k(B)$-orbits.
\end{proposition}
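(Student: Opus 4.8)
### Proof Plan

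The strategy is to reduce, via Theorem~\ref{t.orb}, to verifying that for each $(u,v) \in V^2(Q)$ the Grassmannian $\Gr_{d(u,v)-1}(ukQ_1v)$ is a finite union of $H_B(u,v)$-orbits, and then to produce enough elements of $H_B(u,v)$ by exhibiting linear changes of variables in $\operatorname{Vl}_{(Q,I)}$. First I would recall from the discussion preceding the proposition that $\operatorname{Vl}_{(Q,I)}$ consists of tuples $g = (g_{uv}) \in \prod_{u,v}\GL(ukQ_1v)$ satisfying $g_{uv}V_\lambda(\alpha)g_{vw}^T = \chi(\alpha)V_\lambda(\alpha)$ for scalars $\chi(\alpha) \in k^\times$, one for each arrow $\alpha$ of $r_2Q$. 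Since $\operatorname{Vl}_{(Q,I)} \le H_B$, any such $g$ contributes $g_{uv}$ to $H_B(u,v)$. So it suffices to show: for every pair $(u,v)$, the image in $\GL(ukQ_1v)$ of those $g$ is large enough that $\Gr_{d(u,v)-1}(ukQ_1v)$ — equivalently $\PP((ukQ_1v)^*)$, the space of hyperplanes — breaks into finitely many orbits. The natural candidate for ``large enough'' is the group of invertible diagonal matrices (in the standard arrow basis), exactly as in the monomial case of the previous corollary: a torus acting on a projective space with the standard coordinate action has finitely many orbits (the coordinate subspaces and their complements).

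The core of the argument is therefore to build, for each pair $(u,v)$ and each nonzero scalar vector, a change of variables $g \in \operatorname{Vl}_{(Q,I)}$ whose $(u,v)$-component is the corresponding diagonal matrix, while all other components are chosen to make the constraint equations hold. Here is where the two hypotheses enter. After conjugating the whole representation by the fixed orthogonal tuple $\phi$ — which is harmless since $\phi \in \prod \operatorname{O}(ukQ_1v)$ gives a change of variables and an orthogonal change of basis preserves the ``at most one nonzero entry per row and column'' normal form as well as the transpose-conjugation form of the constraint — I may assume each $W(\alpha)$ is a ``generalized permutation'' matrix: at most one nonzero entry per row and per column. The no-double-edge graph being a disjoint union of oriented trees means $r_2Q$ (which lives on the vertex set $V^2(Q)$) is built so that the constraint equations can be solved recursively without cycles forcing incompatible scalar conditions: propagating a choice of diagonal matrix at one vertex $(u,v)$ of $r_2Q$ along the (tree-shaped) arrows of $r_2Q$ determines diagonal matrices at the neighboring vertices via the relation $g_{uv}W(\alpha)g_{vw}^T = \chi(\alpha)W(\alpha)$, and because $W(\alpha)$ is a generalized permutation matrix this relation reads, entrywise on the support, as a simple multiplicative equation relating one diagonal entry of $g_{uv}$, one of $g_{vw}$, and $\chi(\alpha)$ — always solvable, and never over-determined on a tree.

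Concretely the steps are: (1) replace $V_\lambda$ by $W = \phi V_\lambda$ and $(Q,I)$ by the isomorphic bound quiver, noting $\msa(Q)$ and the orbit structure are unchanged; (2) observe $\operatorname{Vl}_{(Q,I)}$ contains, for each pair $(u,v)$ and each diagonal $D \in \GL(ukQ_1v)$, an element $g$ with $g_{uv} = D$: build $g$ by choosing the remaining $g_{xy}$ to be diagonal, one connected component of the tree at a time, solving the scalar constraint along each arrow of $r_2Q$ (using that $r_2Q$ is a forest, so no consistency obstruction arises, and that each $W(\alpha)$ has at most one nonzero entry per row/column, so each constraint is a single equation in the relevant diagonal entries together with a free scalar $\chi(\alpha)$); (3) conclude that $H_B(u,v)$ contains all invertible diagonal matrices in $\GL(ukQ_1v)$; (4) note the maximal torus of diagonal matrices acts on $\PP((ukQ_1v)^*) \cong \Gr_{d(u,v)-1}(ukQ_1v)$ with finitely many orbits (indexed by subsets of the coordinates), hence $\Gr_{d(u,v)-1}(ukQ_1v)$ is a finite union of $H_B(u,v)$-orbits for every $(u,v) \in V^2(Q)$; (5) apply Theorem~\ref{t.orb} to conclude $\msa(Q)$ is a finite union of $\Aut_k(B)$-orbits.

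The main obstacle I anticipate is step (2): one must check carefully that the recursive solution of the constraint equations along the forest $r_2Q$ is genuinely unobstructed. Two subtleties need attention. First, a single vertex $(x,y)$ of $r_2Q$ may be incident to several arrows (it is only the \emph{no-double-edge graph of $Q$} that is a forest, not $r_2Q$ itself — indeed $r_2Q$ can have a vertex of high degree), so I must confirm that the diagonal entries of $g_{xy}$ forced by different incident arrows do not conflict; this is where the precise combinatorics of $r_2Q$ versus the no-double-edge forest condition on $Q$ must be used — distinct arrows of $r_2Q$ out of $(x,y)$ correspond to distinct targets in $V^2(Q)$ sharing the second coordinate $y$, and one needs the forest hypothesis to guarantee these propagation conditions are simultaneously solvable, possibly by processing $r_2Q$ so that at each vertex only one ``incoming'' arrow constrains $g_{xy}$ and the rest are ``outgoing'' and free. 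Second, one must handle the entries of the diagonal matrices $g_{xy}$ at coordinates \emph{not} in the support of any $W(\alpha)$: these are completely unconstrained and may be set to $1$. Once these bookkeeping points are settled the rest is routine, and I would not belabor the torus-action computation in step (4), which is standard.
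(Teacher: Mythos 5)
Your proposal follows essentially the same route as the paper's proof: reduce via Theorem~\ref{t.orb}, identify $\operatorname{Vl}_{(Q,I)}$ with the group of tuples satisfying $g_{uv}V_\lambda(\alpha)g_{vw}^T = \chi(\alpha)V_\lambda(\alpha)$, conjugate by the orthogonal tuple $\phi$, propagate diagonal matrices along the tree so that $H_B(u,v)$ contains a maximal torus, and finish with the finiteness of torus orbits on projective space. The one obstacle you flag in step (2) is settled in the paper by the observation that when the no-double-edge graph of $Q$ is a disjoint union of oriented trees, $r_2Q$ is itself a disjoint union of oriented trees, so traversing each component reaches every vertex of $r_2Q$ through exactly one arrow and the diagonal matrices are never over-determined.
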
 

\begin{proof} 
By theorem \ref{t.orb}, it suffices to show that for all $(u,v) \in V^2(Q)$, $ukQ_1v$ is a finite union of $H_B(u,v)$-orbits. We show that in fact, $ukQ_1v$ is a finite union of $\pi_{uv}(\operatorname{Vl}_{(Q,I)})$-orbits. Since the no-double-edge graph of $Q$ is a disjoint union of oriented trees, so is $r_2Q$. As before, $\operatorname{Vl}_{(Q,I)}$ can be identified with the elements $g = (g_{uv}) \in \prod_{u,v \in Q_0}{\GL(ukQ_1v)}$ satisfying $g_{uv}V_{\lambda}(\alpha)g_{vw}^T = \chi(\alpha) V_{\lambda}(\alpha)$ for all arrows $\alpha \in r_2Q$. Now, $W(\alpha) = \phi_{uv}V_{\lambda}(\alpha)\phi_{vw}^{-1}$ for all $\alpha \in (r_2Q)_1$. Since $\phi_{uv}^{-1} = \phi_{uv}^T$ for all $(u,v) \in V^2(Q)$, $\operatorname{Vl}_{(Q,I)}$ is conjugate in $\prod_{u,v\in Q_0}{\GL(ukQ_1v)}$ to the collection of all $g$ satisfying $g_{uv}W(\alpha)g_{vw}^T = \chi(\alpha) W(\alpha)$ for all $\alpha$, and some $\chi(\alpha) \in k^{\times}$. Call this group $G$. Pick a function $\mu : uQ_1v \rightarrow k^{\times}$, and let $g_{uv}^{\mu} \in \GL (ukQ_1v)$ be the map $g_{uv}^{\mu}(\gamma ) = \mu(\gamma )\gamma$. We claim that there exists a $g \in G$ such that $g_{uv} = g_{uv}^{\mu}$. To construct this map, pick $\alpha \in (r_2Q)_1$ with $s(\alpha) = (u,v)$. Let $t(\alpha) = (v,w)$. Then $g_{uv}^{\mu}W(\alpha)$ rescales the rows of $W(\alpha)$, and so there exists an appropriate diagonal matrix $g_{vw} \in \GL(vkQ_1w)$ such that $g_{uv}^{\mu}W(\alpha)g_{vw}^T = W(\alpha)$. For each $\beta$ with $t(\beta) = (u,v)$, we can find another diagonal matrix $g_{s(\beta)}$ satisfying $g_{s(\beta)}W(\alpha)g_{(u,v)}^T = W(\alpha)$. Now repeat this procedure for the targets of the $\alpha$'s and the sources of the $\beta$'s. Since the connected component of $r_2Q$ containing $(u,v)$ is an oriented tree, we can iterate this process a finite number of times to build $g$ on this connected component. We can finish the construction of $g$ by defining it to be the identity on the remaining components. We have shown that $\pi_{uv}(G)$ contains the diagonal matrices. But $\pi_{uv}(G) = \phi_{uv}\pi_{uv}(\operatorname{Vl}_{(Q,I)})\phi_{uv}^{-1}$, so that $\pi_{uv}(\operatorname{Vl}_{(Q,I)})$, and hence $H_B(u,v),$ must contain a maximal torus of $\GL(ukQ_1v)$. Then $ukQ_1v$ is a finite union of $H_B(u,v)$-orbits, as we wished to show.
\end{proof}  

\noindent {\bf{Note:}} For any quiver $Q$, the condition that an $r_2$-ideal $\lambda$ be monomial is equivalent to the condition that each matrix in $V_{\lambda}$ has at most one non-zero entry. But the map  

\begin{center} 
$\displaystyle\begin{array}{c} \{ \text{$r_2$-generating sets of $r_2$-ideals of $kQ$} \} \rightarrow \rep_{\underline{d}}(r_2(Q)) \\ \lambda \mapsto V_{\lambda} \end{array}$
\end{center} 

\noindent is a surjection whenever the no-double-edge graph of $Q$ is a disjoint union of oriented trees. Hence, this allows us to easily construct examples of non-monomial bound quiver algebras $kQ/I$, whose Ext-quiver is not Schur, with the property that $\msa (Q)$ is a finite union of $\Aut_k(kQ/I)$-orbits. 

\begin{example} 
Let $Q$, $I_2$, and $\mu$ be as in example \ref{r2Ex}. Then $V_{\mu}$ satisfies the conditions of proposition \ref{r2FOP}, and so $\msa (Q)$ is a finite union of $\Aut_k(kQ/I_2)$-orbits. Of course, $Q$ is not Schur, and $kQ/I_2$ is not a monomial algebra.
\end{example}


\begin{thebibliography}{J\c{S}} 
\bibliographystyle{amsalpha}   

\bibitem{Ag} Agore, A. L. {\em{The Maximal Dimension of Unital Subalgebras of the Matrix Algebra}}, Forum Math. 29, no. 1, (2017), 1-5.  

\bibitem{ASS} Assem I., Simson D., Skowro\'nski, A. \emph{Elements of the Representation Theory of Associative Algebras 1: Techniques of Representation Theory}, London Mathematical Society Student Texts 65, Cambridge University Press, New York (2006).

\bibitem{Bass} Bass H. \emph{Algebraic K-Theory}, Benjamin, New York, 1968.   

\bibitem{Bong} Bongartz K., \emph{On Degenerations and Extensions of Finite-Dimensional Modules}, Advances in Math. 121, no. 2, (1996), 245-287.  

\bibitem{BHZ1} Bongartz K., Huisgen-Zimmermann B., \emph{The Geometry of Uniserial Representations of Algebras II. Alternate Viewpoints and Uniqueness}, J. Pure Appl. Algebra 157, (2001), 23-32.  

\bibitem{BHZ2} Bongartz K., Huisgen-Zimmermann B., \emph{Varieties of Uniserial Representations IV. Kinship to Geometric Quotients}, Trans. Amer. Math. Soc. 353, (2001), 2091-2113.  

\bibitem{CR} Caldero P., Reineke M. \emph{On the Quiver Grassmannian in the Acyclic Case}, J. Pure Appl. Algebra 212, no. 11, (2008), 2369-2380. 

\bibitem{CIFR1} Cerulli Irelli G., Feigin E., Reineke M. \emph{Quiver Grassmannians and Degenerate Flag Varieties}, Algebra Number Theory 6, (2012), 165-193. 

\bibitem{CIFR2} Cerulli Irelli G., Feigin E., Reineke M. \emph{Degenerate Flag Varieties: Moment Graphs and Schr\"oder Numbers}, J. Algebraic Combin. 38, (2013), 159-189. 

\bibitem{CIFR3} Cerulli Irelli G., Feigin E., Reineke M. \emph{Desingularization of Quiver Grassmannians for Dynkin Quivers}, Adv. Math. 245, no. 1, (2013), 182-207.

\bibitem{CB} Crawley-Boevey, W. W. \emph{Maps Between Representations of Zero-Relation Algebras}, J. Algebra 126, no. 2, (1989), 259-263. 

\bibitem{deGraaf} de Graaf, W. A. \emph{Classification of Nilpotent Associative Algebras of Small Dimension}, Int. J. Algebra Comput., 28, no. 1, (2018), 133-161.

\bibitem{Fr} Fr\"olich A., \emph{The Picard Group of Noncommutative Rings, in Particular of Orders}, Trans. AMS, 180, (1973), 1-45.



\bibitem{Gab} Gabriel P. \emph{Finite Representation Type is Open}. In: Dlab V., Gabriel P. (eds) Representations of Algebras. Lecture Notes in Mathematics 488, Springer, (1975).  

\bibitem{Geiss} Geiss C., \emph{Introduction to Moduli Spaces Associated to Quivers (With an Appendix by L. Le Bruyn and M. Reineke)}, in Trends in Representation Theory of Algebras and Related Topics, Contemporary Mathematics 406, Amer. Math. Soc., (2006), 31-50. 

\bibitem{Ger0} Gerstenhaber M. \emph{On the Deformation of Rings and Algebras}, Ann. of Math. (2) 79, no. 1, (1964), 59-103.   

\bibitem{Ger} 
Gerstenhaber M. \emph{On Nilalgebras and Linear Varieties of Nilpotent Matrices I}, Amer. J. Math. 80, no. 3, (1958), 614-622. 

\bibitem{Ger2} Gerstenhaber M. {\em{On Dominance and Varieties of Commuting Matrices}}, Ann. of Math. 73, no. 2, (1961), 324-348. 


\bibitem{GS1} Guil-Asensio F., Saor\'in M. \emph{The Group of Outer Automorphisms and the Picard Group of an Algebra}, Algebr. Represent. Theory 2, (1999), 313-330.

\bibitem{GS2} Guil-Asensio F., Saor\'in M. \emph{The Automorphism Group and the Picard Group of a Monomial Algebra}, Comm. Algebra 27, no. 2, (1999), 857-887.    

\bibitem{HZ} Huisgen-Zimmermann, B. \emph{Fine and Coarse Moduli Spaces in the Representation Theory of Finite-Dimensional Algebras}, in Expository Lectures in Representation Theory, Contemporary Mathematics 607, Amer. Math. Soc., (2014), 1-34. 

\bibitem{IS} Iovanov M. C., Sistko A. \emph{Maximal Subalgebras of Finite-Dimensional Algebras}, (2017). Accessed online at https://arxiv.org/abs/1705.00762.

\bibitem{Jac} Jacobson N. {\em{Schur's Theorems on Commutative Matrices}}, Bull. Amer. Math. Soc. 50 (1944), 431-436.  

\bibitem{King} King A., \emph{Moduli of Representations of Finite-Dimensional Algebras}, Quart. J. Math. Oxford (2) 45, no. 180, (1994), 515-530.

\bibitem{Laff} Laffey T. J. {\em{The Minimal Dimension of Maximal Commutative Subalgebras of Matrix Algebras}}, Linear Algebra Appl. 71 (1985), 199-212.

\bibitem{Lak} Lakshmibai V., Brown J. \emph{The Grassmannian Variety: Geometric and Representation-Theoretic Aspects}, Developments in Mathematics 47, Springer, (2015). 

\bibitem{Mir} Mirzakhani M. {\em{A Simple Proof of a Theorem of Schur}}, Amer. Math. Monthly 105 (1998), 260-262. 

\bibitem{Motz1} Motzkin T., Taussky O. {\em{Pairs of Matrices with Property L}}, Trans. Amer. Math. Soc. 73 (1952),108-114.

\bibitem{Motz2} Motzkin T., Taussky O. {\em{Pairs of Matrices with Property L (II)}}, Trans. Amer. Math. Soc. 80 (1955), 387-401. 

\bibitem{Pi} Pierce R.S., \emph{Associative Algebras}, Graduate Texts in Mathematics 88, Springer, (1982), 436. 

\bibitem{Poll} Pollack D. R. \emph{Algebras and their Automorphism Groups}, Comm. Algebra 17, no. 8, (1989), 1843-1866.  

\bibitem{Raf}
Rafael M. D., \emph{Separable functors revisited}, Comm. Algebra 18 (1990), 1445-1459.  

\bibitem{Rac0} Racine M. L. \emph{On Maximal Subalgebras}, J. Algebra 30, no. 1-3, (1974), 155-180.

\bibitem{Rac} Racine M. L. {\em{Maximal Subalgebras of Central Separable Algebras}}, Proc. Amer. Math. Soc. 68, no. 1, (1978), 11-15.    

\bibitem{Rein} Reineke M.  \emph{Every Projective Variety is a Quiver Grassmannian}, Algr. Represent. Theory 16, no. 5, (2013), 1313-1314. 

\bibitem{Scho} Schofield A., \emph{General Representations of Quivers}, Proc. London Math. Soc. (3) 65, no. 1, (1992), 46-64. 

\bibitem{Sch} Schur I. {\em{Zur Theorie Vertauschbarer Matrizen}}, J. Reine Angew. Math. 130, (1905), 66-76.



\end{thebibliography}
\end{document}